\newtheorem{theorem}{Theorem}
\newtheorem*{theorem*}{Theorem}
\newtheorem{definition}[theorem]{Definition}
\newtheorem{example}[theorem]{Example}
\newtheorem{lemma}[theorem]{Lemma}
\newtheorem{remark}[theorem]{Remark}
\newcommand{\Ortoex}
\newcommand{\norm}[1]{\left\lVert #1 \right\rVert}
\newcommand{\abs}[1]{\left| #1 \right|}
\begin{document}

\title{A noncommutative Ruelle's Theorem for a normalized potential taking values on positivity-improving operators}

\author{William  M. M. Braucks and  Artur O. Lopes}

\maketitle
\begin{abstract}

Let \(\mathcal{A}\) be a finite-dimensional real (or complex) C*-algebra, \(\Omega_{A}\) an aperiodic subshift of finite type, and \(\mathcal{C}(\Omega_{A}; \mathcal{A})\) the set  of continuous functions from
\(\Omega_{A}\) to \(\mathcal{A}\).  The shift $\sigma$ provides dynamics. Given a real Lipschitz potential $\varphi\in \mathcal{C}(\Omega_{A}; \mathfrak{L}(\mathcal{A}))$, where $\mathfrak{L}(\mathcal{A})$ is the set of linear operators acting on a real $\mathcal{A}$, we introduce a noncommutative analogue of Ruelle's 
operator, which acts on \(\mathcal{C}(\Omega_{A}; \mathcal{A})\).  Assuming the positivity-improving hypothesis, we prove  a version of Ruelle's Theorem   whenever the operator   is normalized. An  eigenstate (a linear functional) invariant for the  action of the noncommutative Ruelle's 
operator  will play the role of the Gibbs probability of  Thermodynamic Formalism; to be called a  Gibbs eigenstate.  We introduce the concept of entropy for a Gibbs eigenstate (obtained from a certain family of potentials $\varphi$) -   generalizing  the classical one. In our setting, there is currently no direct relationship with cocycles and Lyapunov exponents.  We present  examples illustrating the novelty of the cases that can be considered, ranging  from topics related to quantum channels to Pauli matrices. Interesting cases:  $\mathcal{A}=M_{N \times N}(\mathbb{R})$ and  $\mathcal{A}= \mathbb{R}^{N}$.

\end{abstract}

AMS 2010 Classification Number - 37D35; 94A40

\section{Introduction}

Ruelle's Theorem plays an important role in the celebrated thermodynamic formalism approach to classical statistical
mechanics \cite{ruelle-rig,craizer-m,lopes-nft,mane-ntf,papo,walters}. It provides us with a very useful global
description of the equilibrium states. These states are traditionally represented by probability measures on the
configuration space. We denote by $\sigma$ the shift and by $\Omega_A\subset\{1,2,...,k\}^\mathbb{N}$ a subshift of finite type (see Definition \ref{fifi}).
Such stationary probabilities for the shift describe the long-term statistical behavior of the system under a given
interaction potential $B: \Omega_A \to \mathbb{R}$, that is, the likelihood of observing particular configurations in the thermodynamic regime. The potential $B$ here corresponds  to the negative of the Hamiltonian of Statistical Mechanics.

 Since we are primarily interested in obtaining results involving real quantities, such as entropy, etc., we chose to describe the model using a real $C^*$-algebra,
	but the formulation using a complex $C^*$-algebra would also be compatible with our reasoning.

Unlike the classical approach, commonly used in statistical mechanics, which analyzes finite subsystems and then takes a thermodynamic limit, Ruelle's
Theorem identifies equilibrium states directly; Kolmogorov-Sinai entropy  for the shift plays an important role. It characterizes them as the product of an eigenfunction and an
eigenmeasure of a positive linear operator on continuous functions, known as the transfer (or Ruelle) operator. This
Theorem also helps to  establish an associated  variational principle, according to which the topological pressure is given by the
Legendre–Fenchel transform of the Kolmogorov-Sinai entropy (which is an affine function). It is well known that the study of  equilibrium states for Hölder potentials defined on $\{1,2,...,k\}^\mathbb{N}$ helps to analyze  equilibrium states for Hölder potentials defined on $\{1,2,...,k\}^\mathbb{Z}$ (see Section 1 in \cite{papo}).

Ruelle's Theorem for a potential $B:\Omega_A \to \mathbb{R}$ is a dynamical counterpart of the Perron-Frobenius Theorem, which, as noted in \cite{evkr}, may be
regarded as concerning the spectral properties of positive
linear operators on finite-dimen\-sional commutative C*-algebras.
%

To clarify this interpretation, recall, from Gelfand duality \cite[Theorem 2.1.11]{brro}, that the
finite-dimensional commutative C*-algebras are precisely those of the form \(\mathbb{R}^{n}\). For a linear
self-mapping of \(\mathbb{R}^{n}\), being positive is equivalent to its matrix having positive entries. Note that we are
referring here to the notion of positivity in the sense of preserving a positive cone, not to the notion of
(semi-)definite positivity of a linear operator on a Hilbert space. This is particularly important regarding the linear
self-mappings of \(\mathbb{R}^{n}\), since this set can be seen both as a finite-dimensional commutative C*-algebra and
as a Hilbert space. Note also that both the matrix which represents a linear self-mapping of \(\mathbb{R}^{n}\) and
the positive cone \(\mathbb{R}^{+n}\) depend on the choice of a basis.

Extending this interpretation, we may regard Ruelle's Theorem as replacing finite-dimensional commutative real C*-algebras
by infinite-dimensional real commutative C*-algebras of the form \(\mathcal{C}(\Omega_{A}; \mathbb{R})\), and positive
matrices by transfer operators. This observation is the starting point for the version of Ruelle's Theorem we are going
to prove here.

It is also relevant to note that there are many other successful versions of the Perron-Frobenius theory that have been proved for
different  settings; see, for instance
\cite{nc-rto,kreinrutman,patger-irr,ts-core,nuss-kr,ch-nonlinear,zhang-kr,faren-irr,brknlo-epgcgp,Arb,Miha}.

It is natural to look for noncommutative analogues of Ruelle's Theorem, which can be eventually  used in  applications in quantum statistical
mechanics  (see, for instance  \cite{evkr}, \cite{bara}, \cite{Vwe}, \cite{brknlo-epgcgp}, \cite{brknlo-epgcgp1}, \cite{Mayer} or \cite{Lar}). Perhaps the most successful example was Matsui's quantum channel formalism, inaugurated in \cite{nc-rto}.
There, it is shown that an analogue of Ruelle's Theorem holds for a wide class of transfer operators on UHF\footnote{Uniformly
HyperFinite (see \cite{ta-theory-i,da-example,rolala}). For example, the CAR algebra.} algebras, which are a family
of noncommutative C*-algebras associated with infinite chains of quantum spin particles; our setting  is somewhat different.

In the present paper, we prove a version of Ruelle's Theorem for a class of finite-dimensional  eventually noncommutative
C*-algebras different from those of \cite{nc-rto}.
More precisely: for the class of C*-algebras of continuous functions of aperiodic subshifts of finite
type\footnote{Sometimes called topological Markov shifts.} into finite-dimensional real $C^*$-algebras. In order to do so, we borrow
several definitions and techniques from \cite{papo,walters,craizer-m,mane-ntf,lopes-nft,olivi-fte}.

In the classical Ruelle theorem for a positive Lipschitz  potential $B:\Omega_A \to (0,\infty)$, the interaction potential enters the transfer operator through a strictly
positive scalar function in exponentiated form. This strict positivity of $B$ plays a fundamental role: it ensures
that the transfer operator eventually maps non-zero positive functions to strictly positive ones, thereby enabling
uniqueness and convergence results for eigenfunctions and eigenmeasures.
This is an important issue to consider when attempting to obtain noncommutative generalizations of classical theory.

In our noncommutative setting, we sought a natural generalization of this structure. While it is tempting to
consider positive or even completely positive operator-valued potentials — the latter being a widely accepted
generalization of scalar positivity \cite{choi-schw,choi-comp,st-pfun,st-pos,arv-sub,arv-sub2} —
these notions alone are insufficient for our purposes. In particular, they do not guarantee that the transfer
operator satisfies the conditions needed in \cite[Theorem 4.1, hypothesis (a)]{glwe}, which require a form of
strict positivity.

We therefore impose the alternative condition that the potential takes values in positivity-improving linear mappings:
that is, mappings which send every non-zero positive element to a strictly positive one. Notice that this condition is
stronger than positivity, but does not imply nor is implied by complete positivity. However, it captures the same
essential structure present in the classical case, and enables us to reproduce the key features of the Ruelle
Theorem in this more general, noncommutative context. Quantum channels  provide  examples of linear transformations acting on the set of matrices and preserving the cone of positive matrices (see \cite{nich}). Explicit examples of positivity-improving linear mappings acting on the space of $n$ by $n$ matrices  appear in Section 8 in \cite{brknlo-epgcgp}. We will consider here functions defined on  the symbolic space $\Omega_A$ taking values  in positivity-improving linear mappings.

%
%

To fully understand the significance of the results obtained here, it is necessary to describe first some aspects of the classical Ruelle Theorem as described in \cite{papo}. Consider a Hölder (or Lipschitz) positive  potential $J: \Omega_A \to (0,\infty) \subset\mathbb{R}$, and the associated Ruelle operator $\mathcal{L}_J$, which acts on continuous functions $f: \Omega_A \to \mathbb{R}$ via $\mathcal{L}_J(f)(x) =\sum_{\sigma(y)=x} J(y)f(y).$ The operator $\mathcal{L}_J$ preserves the set of continuous positive functions.  We say that $J$ is normalized if $\mathcal{L}_J(1)=1$; in this case, let $\mathcal{L}_J^*$   be the dual operator, which acts on the set of probabilities. A probability $\mu_J$ that satisfies the fixed-point property for $\mathcal{L}_J^*$  is called a Gibbs probability (sometimes called Gibbs state) for the normalized potential $B$. The probability $\mu_J$ is an eigenprobability for the operator $\mathcal{L}_J^*$ associated with the eigenvalue $1$.
The set of  probabilities in this class is invariant for the shift action and is one of the main focuses of research in thermodynamic formalism. They describe the possible equilibrium states for Hölder (or Lipschitz) potentials defined on $\{1,2,...,k\}^\mathbb{N}$. The maximal entropy measure $\mu$ is the fixed point for the action of the operator $\mathcal{L}_{\frac{1}{k}}^*$. 

We will denote by $\mathfrak{L}(\mathcal{A})$ the set of linear operator acting on the finite dimensional C*-algebra $\mathcal{A}$.
Given a Lipschitz potential $\varphi: \Omega_A \to \mathfrak{L}(\mathcal{A})$,
we will introduce a noncommutative analogue of the Ruelle operator, which will be denoted by $L_\varphi$; we will demonstrate, in a noncommutative setting, the existence of the analog of  Hölder Gibbs states, and these will be denoted generically by linear operators $\eta$, which will be called  eigenstates. In some examples (see sections \ref{exao} and \ref{some}), we will explicitly exhibit such an eigenstate operator $\eta$; in some of them, a {\it classical} Lipschitz equilibrium probability (here called  Gibbs probability) will help to express the precise form of this eigenstate $\eta$. We will assume that the noncommutative Ruelle operator is normalized (see Definition \ref{nonai}). An important issue, exemplified  in Example \ref{ex:max-ent}, is the fact  that scalar probability measures are
insufficient to describe equilibrium in the noncommutative setting. The variety of examples one can get ranges from topics related to quantum channels to Pauli matrices (see \cite{nich}, \cite{Benoi}, \cite{brknlo-epgcgp} or \cite{brknlo-epgcgp1} for definitions). In Example \ref{cf}  the potential $\varphi: \{1,2\}^\mathbb{N} \to \mathcal{L} (  M_{2 \times 2}(\mathbb{R} ))$ is given by a fixed quantum channel acting on the family of continuous functions $g:\{1,2\}^\mathbb{N} \to M_{2 \times 2}(\mathbb{R})$. In a particular case one can take $\mathcal{A}= \mathbb{R}^{N}$ and $\varphi$  of the form \(\varphi : \Omega_{A} \to M_{N \times N}(\mathbb{R})\) (see Example \ref{er2}).

Given the normalized  noncommutative Ruelle operator $L_\varphi$,  the associated
eigenstate \(\eta\) will be the one such that is invariant for the action of this Ruelle operator (as for instance in \eqref{yst}). One can show that such $\eta$ also satisfy a form of shift-invariance (see  \eqref{efen} in Section \ref{Pre}). It would be natural to call such a state of Gibbs eigenstate.

We believe that at this point an explicit example will help to understand the generality of what is being proposed here (we will elaborate on more details  in Example \ref{ex:max-ent}). This is perhaps the simplest of all possible examples. Take $\mathcal{A}= M_{2 \times 2}(\mathbb{R})$ and denote a general  matrix-valued function by \(g \in \mathcal{C}(\Omega_{A}; M_{2 \times 2}(\mathbb{R}))\). Given a normalized  Lipschitz potential $\varphi$ taking values in linear operators \(M_{2 \times 2}(\mathbb{R}) \to M_{2 \times 2}(\mathbb{R})\), we can introduce a normalized noncommutative Ruelle operator $L_\varphi$, which will act on functions $g$ of the above form.
A state \(\eta\) acting of functions $g$ is  a linear functional:
$$\eta : \mathcal{C}(\left\{1,2\right\}^{\mathbb{N}}; M_{2 \times 2}(\mathbb{R})) \to \mathbb{R}.$$ For the case of the noncommutative Ruelle operator $L_\varphi$ described in Example \ref{ex:max-ent}, the associated eigenstate $\eta$ is  defined by:
\begin{equation*}
g \mapsto \eta(g) := \int \widehat{\operatorname{tr}}\, (g(x)) \operatorname{d} \! \mu(x) \text,
\end{equation*}
where \(\mu\) denotes the maximal entropy measure for the action of the full shift on on $\{1,2\}^\mathbb{N}$, and $\widehat{\operatorname{tr}} := \frac{1}{2} \operatorname{tr}$ is the normalized trace. We may call such $\eta$ of Gibbs  eigenstate.

In the classical setting, one way to obtain an approximation of the Gibbs state $\mu_J$ for the potential $J$, would be to take the limit in $n$ of the iterations of the dual  operator $\mathcal{L}_J^*$. More precisely, given any probability $\rho$, then $\mu_J= \lim_{n\to \infty} (\mathcal{L}_J^*)^n(\rho)$. In fact, one can show that the action of $\mathcal{L}_J^*$ is a contraction for the $1$-Wasserstein distance  (see \cite{KLS}).  Our main theorem will ensure that one can obtain the eigenstate $\eta$ as a limit in a similar procedure; we will work out some examples.

In Sections \ref{exao} and  \ref{some}, we will present examples that illustrate the novelty of our results. In Section \ref{mma}, we will present the proof of our main theorem, which is a noncommutative version of the Ruelle Theorem. In Section \ref{Pre} we will introduce a form of entropy for  eigenstates associated with a certain family of normalized potentials, the so-called {\it trace-type potentials} (see Definitions \ref{efen},  \ref{def:tr-type} and Theorem \ref{prep}).
In Section \ref{rela}, at the end of the paper, we will comment on similarities and dissimilarities with some previous works. We claim that, to the best of our knowledge, our results are not included in the statements  of other works in the literature.

The analysis of the case when $\mathcal{A}$ has infinite dimension introduces several technical difficulties and will be left for future work.

We note that our results are of a different nature from those of the linear cocycle and Lyapunov exponents theory, and that they are described in a vast literature; it is worth listing  here some important examples: \cite{AvVi}, \cite{Bochi}, \cite{back1}, \cite{back2}, \cite{Feng}, \cite{Varao} and \cite{Morris1}. f  We do not rule out the possibility that in some future work, Lyapunov exponents and Oseledet's theorem may be relevant for obtaining results  to the present setting.

The concept of entropy described here, associated with the use of a normalized generalized Ruelle operator, is quite distinct from those considered in other texts regarding C*-algebras, such as the following: \cite{En1}, \cite{En4}, \cite{En3}, \cite{En2}, \cite{Mayer} and \cite{En5}.

\section{Preliminaries and definitions}

Let \(\Sigma = \left\{1, 2, \cdots, k\right\}\) and \(A : \Sigma \times \Sigma \to \left\{0, 1\right\}\) be
a matrix of zeroes and ones. We call elements \(i, j \in \Sigma\) symbols and \(A\) the transition matrix. When
\(A(i, j) = 1\) we say that \((i, j)\) is allowed and when \(A(i, j) = 0\) we say that \((i, j)\) is forbidden. In
what follows, \(A\) is always taken to be aperiodic.
\begin{definition} \label{fifi}
The subshift of finite type \(\Omega_{A}\) is given by:
\begin{equation*}
\Omega_{A} := \left\{ (x_{n})_{n \in \mathbb{N}}\, \middle| \,x_{n} \in \Sigma, \, A(x_{n}, x_{n+1}) = 1, \, \forall n \in
\mathbb{N}\right\} \text.
\end{equation*}
It is the set of all unilaterally infinite sequences on \(k\) symbols with transitions allowed by \(A\).
\end{definition}

\begin{definition}
We consider \(\Omega_{A}\) to be equipped with the product topology\footnote{Sometimes called Tychonoff product topology.}
and denote the set of continuous functions from \(\Omega_{A}\) to a topological space \(X\) by \(\mathcal{C}(\Omega_{A}; X)\).
\end{definition}

\begin{definition}
Let \(X\) be a Banach space. We denote its norm by \(\abs{ \cdot }_{X}\). We also consider \(\mathcal{C}(\Omega_{A}; X)\)
to be equipped with the uniform norm, and denote it by:
\begin{equation*}
\abs{g}_{\infty} := \sup_{x \in \Omega_{A}} \abs{g}_{X} \text,
\end{equation*}
which makes it a Banach space. Therefore, we mildly abuse notation, since \(\abs{ \cdot }_{\mathcal{C}(\Omega_{A}; X)} := \abs{ \cdot }_{\infty}\).
In each case,  \(X\) will always be clear from context whenever we employ the notation \(\abs{ \cdot }_{\infty}\).
\end{definition}

\begin{definition}
Let \(X\) be a Banach space. We denote its topological dual space by \(X^{\prime}\). We also consider \(X^{\prime}\)
to be equipped with the weak* norm, which makes it a Banach space.
\end{definition}

\begin{definition}
Let \(X, Y\) be two Banach spaces, \(T : X \to Y\) a bounded linear mapping. We denote the transpose of \(T\) by
\(T^{\prime} : Y^{\prime} \to X^{\prime}\). It is also a bounded linear mapping.
\end{definition}

\begin{definition}
Let \(0 < \theta < 1\) be an arbitrary real parameter. The metric \(d_{\theta} : \Omega_{A} \times \Omega_{A} \to
\mathbb{R}^{+}\) is given by:
\begin{align*}
d_{\theta}(x, y) & := \theta^{-N(x,y)} \text{, where:} \\
N(x,y) & := \min \left\{n \in \mathbb{N} \middle| x_{n} \neq y_{n}\right\} \text.
\end{align*}
\end{definition}

\begin{remark}
The metric \(d_{\theta}\) induces the product topology for any \(0 < \theta < 1\), and $\Omega_A$ is a compact set..
\end{remark}

\begin{definition}
Whenever \(X\) is a Banach space, we denote the set of Lipschitz functions from \(\Omega_{A}\) to \(X\) by
\(\mathcal{C}_{\theta}(\Omega_{A}; X) \subseteq \mathcal{C}(\Omega_{A}; X)\). We consider it to be equipped with the
Lipschitz seminorm:
\begin{equation*}
\abs{g}_{\theta} := \sup_{x \neq y \in \Omega_{A}} \frac{\abs{g(x) - g(y)}_{X}}{d_{\theta}(x,y)} \text,
\end{equation*}
and the Lipschitz norm:
\begin{equation*}
\norm{g}_{\theta} := \abs{g}_{\infty} + \abs{g}_{\theta} \text.
\end{equation*}
\end{definition}

\begin{remark} \label{re:arz-asc}
When $X$ is finite-dimensional, the Arzelà-Ascoli Theorem ensures that every bounded subset of
\(\left(\mathcal{C}_{\theta}(\Omega_{A}; X), \norm{\cdot}_{\theta}\right)\) is relatively compact in
\(\left(\mathcal{C}(\Omega_{A}; X), \abs{\cdot}_{\infty}\right)\).
\end{remark}

\begin{definition}
The shift mapping \(\sigma : \Omega_{A} \to \Omega_{A}\) is given by:
\begin{equation*}
\sigma(x) = \sigma((x_{n})_{n \in \mathbb{N}}) := (x_{n+1})_{n \in \mathbb{N}} \text.
\end{equation*}
\end{definition}

\begin{remark}
The shift mapping is expanding with respect to the metric \(d_{\theta}\) for any \(0 < \theta < 1\). The expanding parameters
may be taken \(\theta^{-1} > 1\) and \(\theta > 0\). Since \(A\) is aperiodic, the shift mapping is also topologically mixing.
\end{remark}

\begin{remark}
Let \(\mathcal{A}\) be a C*-algebra. Then \(\mathcal{C}(\Omega_{A}; \mathcal{A})\) is also a C*-algebra. It is commutative if, and only if,
\(\mathcal{A}\) is. It is unital if, and only if, \(\mathcal{A}\) is. Notice it is infinite-dimensional except possibly if \(k = 1\),
when it is isomorphic to \(\mathcal{A}\), which may be finite-dimensional. The reader should be mindful of this through the
following definitions.
\end{remark}

\begin{definition}
Let \(\mathcal{A}\) be a real C*-algebra. We say that an element \(a \in \mathcal{A}\) is positive when either of the following equivalent
conditions hold:
\begin{enumerate}
\item \(a = b^{\ast}b\) for some \(b \in \mathcal{A}\).

\item \(a = a^{\ast}\) and \(\operatorname{spec}(a) \subseteq [0, +\infty)\).
%
\end{enumerate}
In such a case, \(b \in \mathcal{A}\) can be chosen \(b = \abs{a} = \sqrt{a^{\ast}a}\), which is also positive, taking into
account the functional calculus \cite[Section 4.6.1]{bp-qsm}. Then, we write \(a \geq 0\).
\end{definition}

\begin{definition}
The set of all \(\mathcal{A} \ni a \geq 0\) is called the positive cone of \(\mathcal{A}\), and written \(\mathcal{A}^{+}\).
\end{definition}

\begin{remark}
The positive cone of \(\mathcal{C}(\Omega_{A}; \mathcal{A})\), written \(\mathcal{C}(\Omega_{A}; \mathcal{A})^{+}\), is equal to the set of
continuous functions from \(\Omega_{A}\) to \(\mathcal{A}^{+}\), written \(\mathcal{C}(\Omega_{A}; \mathcal{A}^{+})\).
\end{remark}

\begin{definition}
Let \(\mathcal{A}\) be a unital C*-algebra. We write \(\mathbb{I}_{\mathcal{A}}\) for the multiplicative unit of \(\mathcal{A}\). To simplify
the notation, we also write \(\mathbb{I}_{\mathcal{C}(\Omega_{A}; \mathcal{A})} := \mathbb{I}_{\mathcal{C}}\). Notice this is merely the constant
function \(\mathbb{I}_{\mathcal{C}} : \Omega_{A} \to \mathcal{A}\), that maps every point \(x \in \Omega_{A}\) to the multiplicative unit
of \(\mathcal{A}\), that is: \(\mathbb{I}_{\mathcal{C}} \equiv \mathbb{I}_{\mathcal{A}}\), or \(x \overset{\mathbb{I}_{\mathcal{C}}}{\longmapsto} \mathbb{I}_{\mathcal{A}}\).
\end{definition}

\begin{definition}
Let \(\mathcal{A}\) be a unital C*-algebra. We say that an element \(a \in \mathcal{A}\) is strictly positive when there exists
\(\mathbb{R} \ni \varepsilon > 0\) such that \(a \geq \varepsilon \mathbb{I}_{\mathcal{A}}\). Then, we write \(a > 0\).
\end{definition}

\begin{remark}
The set of all \(\mathcal{A} \ni a > 0\) is the interior of the positive cone of \(\mathcal{A}\), and therefore written \(\mathcal{A}^{+^{\circ}}\).
\end{remark}

\begin{remark}
Unless \(\mathcal{A} = \mathbb{R}\), the conjunction \(a \in \mathcal{A}^{+}\) plus \(a \neq 0\) does not imply \(a \in \mathcal{A}^{+^{\circ}}\).
\end{remark}

\begin{remark}
However, the conjunction \(a \in \mathcal{A}^{+}\) plus there exists \(a^{-1}\) such that \(a^{-1}a = \mathbb{I}_{\mathcal{A}}\) does imply \(a \in \mathcal{A}^{+^{\circ}}\).
Therefore, the strictly positive elements of a unital C*-algebra \(\mathcal{A}\) are exactly the positive units\footnote{
Units are elements possessing a multiplicative inverse.} of \(\mathcal{A}\).
\end{remark}

\begin{definition}
Let \(\mathcal{A}\) be a real $C^*$-algebra. We denote the set of linear mappings from \(\mathcal{A}\) to itself by \(\mathfrak{L}(\mathcal{A})\). We consider
it to be equipped with the operator norm.
\end{definition}

\begin{definition} \label{def:str-pos}
Let \(\mathcal{A}\) be a real C*-algebra. We say that a linear mapping \(\varphi \in \mathfrak{L}(\mathcal{A})\) is positivity-improving when \(\varphi(a) > 0\) for any \(\mathcal{A}^{+} \ni a \neq 0\).
\end{definition}

\begin{remark}
The composite of positivity-improving linear mappings is a positivity-improving linear mapping.
\end{remark}

\begin{remark}
The sum of positivity-improving linear mappings is a positivi\-ty-improving linear mapping.
\end{remark}

\begin{definition}
Let \(\Omega_{A}\) be a subshift of finite type and \(\mathcal{A}\) a finite-dimension\-al real C*-algebra. We say that a function
\(\varphi : \Omega_{A} \to \mathfrak{L}(\mathcal{A})\), written \(x \mapsto \varphi_{x}\), is a potential when
it takes values in positivity-improving linear mappings and is Lipschitz-continuous with respect to the operator
norm.
\end{definition}

\begin{definition}
Let \(\Omega_{A}\) be a subshift of finite type, \(\mathcal{A}\) a real  finite-dimensional C*-algebra, and \(\varphi : \Omega_{A} \to \mathfrak{L}(\mathcal{A})\)
a potential. The transfer operator \(L_{\varphi}\) acts on continuous functions \(g \in \mathcal{C}(\Omega_{A}; \mathcal{A})\) by:
\begin{equation} \label{eq:def-transf}
\left( L_{\varphi} g \right)(x) := \sum_{y \in f^{-1}(x)} \varphi_{y} \big( g(y) \big) \text.
\end{equation}
\end{definition}

\begin{definition} \label{nonai}
Let \(\varphi : \Omega_{A} \to \mathfrak{L}(\mathcal{A})\) be a potential. We say that \(\varphi\) is normalized if it is such that:
\begin{equation*}
L_{\varphi}(\mathbb{I}_{\mathcal{C}}) = \mathbb{I}_{\mathcal{C}} \text,
\end{equation*}
or, equivalently:
\begin{equation} \label{eq:p-w-normal}
\sum_{\substack{i \in \Sigma \\ A(i,x_{1}) = 1}} \varphi_{ix} \mathbb{I}_{\mathcal{A}} = \mathbb{I}_{\mathcal{A}} \text, \quad \forall \, x \in \Omega_{A} \text.
\end{equation}
\end{definition}

\begin{remark}
Equation \eqref{eq:p-w-normal} plus the positivity of every \(\varphi_{x}\) allows us to conclude that \(0 \leq \varphi_{x}(\mathbb{I}_{\mathcal{A}}) \leq \mathbb{I}_{\mathcal{A}}\).
Also, \cite[Corollary 2.9 (Russo-Dye)]{paul} applies. Therefore, \(\norm{\varphi_{x}} \leq 1\) for any \(x \in \Omega_{A}\).
\end{remark}
\medskip

We will  provide a simple example to illustrate the setting we consider here.

\begin{example}
Let $\mathcal{A}= M_{2 \times 2}(\mathbb{R})$, $\Omega=\{1,2\}^\mathbb{N}$, and $\sigma$ the full shift.
Consider a Lipschitz function  $\varphi : \Omega=\{1,2\}^\mathbb{N} \to \left[ M_{2 \times 2}(\mathbb{R}) \to M_{2 \times 2}(\mathbb{R}) \right]=\mathfrak{L}(\mathcal{A})$ ,
such that there exist strictly positive matrices $A_1,A_{2}: \Omega \to  M_{2 \times 2}(\mathbb{R})$, satisfying for any $y\in \Omega$
\begin{equation} \varphi_{(1,y)} = \left( \widehat{\operatorname{tr}}\,( \cdot ) \right) A_{1} \,\, \text{and}\,\, \varphi_{(2,y)} = \left( \widehat{\operatorname{tr}}\,( \cdot ) \right) A_{2} ,
\end{equation}
$$A_1 + A_{2} =I.$$

The corresponding transfer operator \(L_{\varphi}\) acts on a general continuous function $g: \Omega \to  M_{2 \times 2}(\mathbb{R}) $, in such way that $L_\varphi(g)=f$, via

$$ g \,\mapsto [\,y \to f(y)=L_\varphi(g)(y)\,] =[\,y \to\frac{1}{2} (\,\widehat{\operatorname{tr}}\,  g(1,y)\,) \, A_1 + \frac{1}{2} (\,\widehat{\operatorname{tr}}\,  g(2,y)\,) \, A_{2}\,] \text,$$
which means in a short notation

$$ g \,\mapsto L_\varphi(g)(y) =\frac{1}{2} (\,\widehat{\operatorname{tr}}\,  g(1,y)\,) \, A_1 + \frac{1}{2} (\,\widehat{\operatorname{tr}}\,  g(2,y)\,) \, A_{2} \text.$$

Note that the potential \(\varphi\) is normalized and positivity-improving. In this case,
we will be interested in the state  \(\eta : \mathcal{R}(\Omega_{A}; M_{2 \times 2}(\mathbb{R})) \to \mathbb{R}\) that plays the role analogous to the  eigenprobability for the dual of the Ruelle operator.
In   \eqref{gg2} and  \eqref{itre} in Example \ref{spi1}   we will elaborate on this issue.

\end{example}

\begin{example} \label{er2}

Take \(\mathcal{A} = \mathbb{R}^{N}\). Therefore, we have to consider potentials of the form \(\varphi : \Omega_{A} \to M_{N \times N}(\mathbb{R})= \mathfrak{L}( \mathcal{A})\), which will act on
functions \(g \in \mathcal{C}(\Omega_{A}; \mathbb{R}^{N})\).
We present several examples of such kind in Subsection \ref{ok3}.
The most simple one is to  take $\Omega=\{1,2\}^\mathbb{N}$, and $\sigma$ the full shift. The multiplicative element in \(\mathcal{A} = \mathbb{R}^{2}\) is the vector $(1,1)$, therefore the constant potential, $\forall x$
$$\varphi(x) = \begin{bmatrix}
1/4\, & \,1/4 \\
1/4 & 1/4
\end{bmatrix}  $$
is normalized. Given $x\in \Omega \to g(x)=(g_1(x),g_2(x))$, we get
$$ g \,\to L_\varphi(g)(y) =\begin{bmatrix}
1/4\, & \,1/4 \\
1/4 & 1/4
\end{bmatrix}(g_1(1x) + g_2(1x)+ g_1(2x) + g_2(2x))\,.$$

More details will be presented in Subsection \ref{ok3}.

\end{example}

Our main result is:
\smallskip

\begin{theorem} \label{thm:ruelle-normal}
Let \(\Omega_{A}\) be a subshift of finite type, \(\mathcal{A}\) a finite-dimensional C*-algebra, and \(\varphi : \Omega_{A} \to \mathfrak{L}(\mathcal{A})\)
a Lipschitz-continuous normalized potential. Then:
\begin{enumerate}
\item \(1 \in \mathbb{R}\) is a simple isolated eigenvalue of \(L_{\varphi}\).

\item All positive eigenfunctions of \(L_{\varphi}\) are positive multiples of \(\mathbb{I}_{\mathcal{C}}\).

\item
\begin{enumerate}
	\item There is a unique state \(\eta : \mathcal{C}(\Omega_{A}; \mathcal{A}) \to \mathbb{R}\) such that \(L_{\varphi}^{\prime}(\eta) = \eta\).
	It is faithful.
	
	\item \label{thmitm:unilim} For any \(g \in \mathcal{C}(\Omega_{A}; \mathcal{A})\), the sequence \(L_{\varphi}^{n}(g)\) converges uniformly
	to \(\eta(g)\mathbb{I}_{\mathcal{C}}\).
\end{enumerate}

\item The remainder of the spectrum of \(L_{\varphi}\), that is, \(\operatorname{spec}(L_{\varphi}) \setminus \left\{1\right\}\),
is contained in a disk  in the complex plane of radius strictly smaller than \(1\).
\end{enumerate}
\end{theorem}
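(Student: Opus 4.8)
The plan is to treat $L_{\varphi}$ as a positive bounded operator on the ordered Banach space $X := \mathcal{C}(\Omega_{A}; \mathcal{A})$, whose positive cone $X^{+} = \mathcal{C}(\Omega_{A}; \mathcal{A}^{+})$ is normal and has nonempty interior, and then to extract conclusions (1)--(4) by checking the hypotheses of the abstract Perron--Frobenius statement \cite[Theorem 4.1]{glwe}. The elementary inputs come first. The map $L_{\varphi}$ is positive because each $\varphi_{y}$ is; it is unital since $\varphi$ is normalized, i.e. $L_{\varphi}\mathbb{I}_{\mathcal{C}} = \mathbb{I}_{\mathcal{C}}$; and the remark following Definition \ref{nonai} gives $\norm{\varphi_{x}} \leq 1$. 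Thus $\mathbb{I}_{\mathcal{C}}$ is a positive eigenfunction for the eigenvalue $1$, and since $\mathbb{I}_{\mathcal{A}} \in \mathcal{A}^{+^{\circ}}$ it is an \emph{interior} point of $X^{+}$; by the Krein--Rutman circle of ideas, this forces the spectral radius of $L_{\varphi}$ to equal $1$.

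The crux of the argument, and what I expect to be the main obstacle, is verifying the strict-positivity hypothesis \cite[Theorem 4.1, hypothesis (a)]{glwe}: for every $g \in X^{+}$ with $g \neq 0$ there must be an $N$ with $L_{\varphi}^{N} g \in X^{+^{\circ}}$. Iterating the definition gives $(L_{\varphi}^{N} g)(x) = \sum_{\sigma^{N} w = x} (\varphi_{\sigma^{N-1} w} \circ \cdots \circ \varphi_{\sigma w} \circ \varphi_{w})(g(w))$, a sum of positive elements of $\mathcal{A}$. Since $g \geq 0$ is nonzero, continuity produces a cylinder $[c_{1} \cdots c_{m}]$ on which $g$ never vanishes. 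Because $A$ is aperiodic, the shift is topologically mixing, so there is $N_{1}$ with $(A^{\ell})_{ij} > 0$ for all $\ell \geq N_{1}$ and all symbols $i, j$; hence for every $N \geq m + N_{1}$ and every $x \in \Omega_{A}$ the fiber $\sigma^{-N}(x)$ contains a point $w$ whose initial block is $c_{1} \cdots c_{m}$, so that $g(w) \neq 0$. As the composition of positivity-improving maps is again positivity-improving, the corresponding summand $(\varphi_{\sigma^{N-1}w} \circ \cdots \circ \varphi_{w})(g(w))$ is strictly positive, and adding the remaining positive summands preserves strict positivity; thus $(L_{\varphi}^{N}g)(x) \in \mathcal{A}^{+^{\circ}}$ for each $x$. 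The delicate point is \emph{uniformity} in $x$: I would promote this to $L_{\varphi}^{N} g \geq \varepsilon \mathbb{I}_{\mathcal{C}}$ by observing that $x \mapsto (L_{\varphi}^{N}g)(x)$ is continuous and that the least element of $\operatorname{spec}\big((L_{\varphi}^{N}g)(x)\big)$ depends continuously on $x$, hence is bounded below by some $\varepsilon > 0$ on the compact space $\Omega_{A}$. Note that $N$ is allowed to depend on $g$, through the cylinder depth $m$.

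With positivity, an interior eigenfunction $\mathbb{I}_{\mathcal{C}}$ at the spectral radius, and this strict positivity established, \cite[Theorem 4.1]{glwe} yields (1) and (2): $1$ is a simple isolated eigenvalue of $L_{\varphi}$ and the only positive eigenfunctions are positive multiples of $\mathbb{I}_{\mathcal{C}}$. For (3), the eigenstate $\eta$ is the Perron eigenvector of the transpose $L_{\varphi}^{\prime}$ at eigenvalue $1$, scaled so that $\eta(\mathbb{I}_{\mathcal{C}}) = 1$; faithfulness then follows from strict positivity, since for $g \geq 0$, $g \neq 0$ one has $\eta(g) = \eta(L_{\varphi}^{N}g) \geq \varepsilon\, \eta(\mathbb{I}_{\mathcal{C}}) = \varepsilon > 0$. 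The uniform convergence $L_{\varphi}^{n} g \to \eta(g)\mathbb{I}_{\mathcal{C}}$ in (3)(b) is precisely the convergence of $L_{\varphi}^{n}$ to the rank-one spectral projection $g \mapsto \eta(g)\mathbb{I}_{\mathcal{C}}$ attached to the simple dominant eigenvalue $1$.

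Finally, for the spectral gap (4), should it not already be part of the conclusion of \cite[Theorem 4.1]{glwe}, I would establish a Lasota--Yorke (Doeblin--Fortet) inequality of the form $\abs{L_{\varphi}^{n} g}_{\theta} \leq C \theta^{n} \abs{g}_{\theta} + C' \abs{g}_{\infty}$, using the Lipschitz continuity of $\varphi$ together with the expanding property of $\sigma$. Combined with the compact embedding $\mathcal{C}_{\theta}(\Omega_{A}; \mathcal{A}) \hookrightarrow \mathcal{C}(\Omega_{A}; \mathcal{A})$ recorded in Remark \ref{re:arz-asc} and Hennion's theorem, this makes $L_{\varphi}$ quasi-compact with essential spectral radius strictly below $1$. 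Since the peripheral spectrum then reduces to the simple eigenvalue $1$, the rest of $\operatorname{spec}(L_{\varphi})$ lies in a disk of radius strictly smaller than $1$.
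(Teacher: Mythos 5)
Your proposal is correct and follows essentially the same route as the paper: the paper likewise verifies hypothesis (a) of \cite[Theorem 4.1]{glwe} via aperiodicity of \(A\) and composition of positivity-improving maps, proves the same Lasota--Yorke inequality \(\norm{L_{\varphi}^{n} g}_{\theta} \leq (C+1)\abs{g}_{\infty} + \theta^{n}\norm{g}_{\theta}\), and obtains quasi-compactness, isolation of the eigenvalue \(1\), and the spectral gap from the Ionescu-Tulcea and Marinescu theorem \cite[Theorem B.1.1]{finertherm}, which plays exactly the role you assign to Hennion's theorem. The only differences are minor: the paper additionally gives a direct argument for simplicity of the eigenvalue \(1\) (showing a positive fixed point is strictly positive and subtracting \(s\,\mathbb{I}_{\mathcal{C}}\) with \(s := \sup\left\{t \in \mathbb{R} \,\middle|\, t\,\mathbb{I}_{\mathcal{C}} \leq g\right\}\)) rather than reading it off the rank-one limit, and it explicitly checks hypothesis (b) of \cite[Theorem 4.1]{glwe} (boundedness plus strong asymptotic compactness, via the same basic inequality and Arzelà-Ascoli) --- a verification you should state explicitly, since your Lasota--Yorke estimate already supplies it.
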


It is natural to call the  $\eta$ mentioned  above in 3. (a) a Gibbs  eigenstate.

We present a few examples in the next section. We believe they will help to better understand the theoretical results being presented. More examples will be described at the end of the paper in Section \ref{some}.

\section{Some examples} \label{exao}

\begin{example} \label{ex:tr-type}
Let \(\Omega_{A} \subseteq \left\{1, 2, \cdots, k\right\}^{\mathbb{N}}\) be any aperiodic subshift of finite type and $\mathcal{A}=  M_{d \times d}(\mathbb{R})$.
We will define next    a Lipschitz potential $\varphi$. First, for each  $x\in \Omega_{A} $ consider \(\varphi_{x} : M_{d \times d}(\mathbb{R}) \to M_{d \times d}(\mathbb{R})\) such that:
\begin{equation} \label{gg4}
\varphi_{x}(g(x)) = \left( \widehat{\operatorname{tr}}\, g(x) \right) P(x),
\end{equation}
where $P(x)=(P_{i,j})_{i,j=1,...,d}(x)$ is a family of strictly positive matrices (so that each \(\varphi_{x}\) is positivity-improving).
Suppose that \(\varphi\) is normalized, that is:
\begin{equation}\label{cabe}
\sum_{\substack{i \in \Sigma \\ A(i,x_{1}) = 1}} P(ix) = \operatorname{Id} \text,
\end{equation}
for all \(x \in \Omega_{A}\).

Now, we  consider the potential \(\widehat{\operatorname{tr}}\,P\) in the classical sense of \cite{papo}. That is, \(\widehat{\operatorname{tr}}\,P(x)\)   is equal to $\frac{1}{d}\sum_{j=1}^d P_{jj}(x).$

Condition \eqref{cabe} implies that the \emph{scalar} potential \(\widehat{\operatorname{tr}}\,P\)
is classically normalized. Denoting by \(\mu_{\widehat{\operatorname{tr}}\,P}\) the classical equilibrium probability measure for the classical potential  \(\widehat{\operatorname{tr}}\,P\),
we claim that the eigenstate \(\eta : \mathcal{C}(\Omega_{A}; M_{d \times d}(\mathbb{R})) \to \mathbb{R}\), described in item 3. (a) in Theorem \ref{thm:ruelle-normal}, is given
by:
\begin{equation} \label{gg3}
\eta(g) := \int \widehat{\operatorname{tr}}\, g(x) \operatorname{d} \! \mu_{\widehat{\operatorname{tr}}\,P}(x).
\end{equation}

Indeed,
\begin{align*}
\eta(L_{\varphi} g) & = \int \widehat{\operatorname{tr}}\, \left[\left(L_{\varphi} g\right)(y)\right] \operatorname{d} \! \mu_{\widehat{\operatorname{tr}}\,P}(y) \\
& = \int \widehat{\operatorname{tr}}\, \left(\sum_{\substack{i \in \Sigma \\ A(i,y_{1}) = 1}} \left( \widehat{\operatorname{tr}}\, g(iy) \right) P(iy) \right) \operatorname{d} \! \mu_{\widehat{\operatorname{tr}}\,P}(y) \\
& = \int \sum_{\substack{i \in \Sigma \\ A(i,y_{1}) = 1}} \left( \widehat{\operatorname{tr}}\, g(iy) \right) \left(\widehat{\operatorname{tr}}\,P(iy)\right) \operatorname{d} \! \mu_{\widehat{\operatorname{tr}}\,P}(y) \\
& = \int \Big(L_{\widehat{\operatorname{tr}}\,P} \widehat{\operatorname{tr}}\, g\Big)(y) \operatorname{d} \! \mu_{\widehat{\operatorname{tr}}\,P}(y) 
 \text.
\end{align*}
\begin{equation} \label{yst} = \int \widehat{\operatorname{tr}}\, g(x) \operatorname{d} \! \mu_{\widehat{\operatorname{tr}}\,P}(x)= \eta(g) \,\,\,\,\,\,\,\,\,\,\,\,\,\,\,\,\,\,\,
\end{equation}

\end{example}

As  we will see, this example is related to Example \ref{ex:diagonals}.
\medskip

\begin{example} \label{cf}
	The present example  takes into account the reasoning of   \cite[Example 4]{bara},  \cite[Example 8.5]{brknlo-epgcgp} or \cite[Section 2.7]{lopes-immq}. We consider $\Omega=\{1,2\}^\mathbb{N}$ and  $\mathcal{A}=M_{2 \times 2}(\mathbb{R})$. 
	
	Before defining the potential $\varphi$, we will consider a certain operator $a \to\varepsilon(a \otimes\operatorname{Id}) $, which is a particular case of a quantum channel (see \cite{nich}).  Later, we will consider the potential $\varphi$ such that  $g \to\varepsilon(g \otimes\operatorname{Id})= \varphi (g) $.
In summary, the potential $\varphi$ is given by a fixed quantum channel acting on the family of continuous functions 
$$g:\{1,2\}^\mathbb{N} \to M_{2 \times 2}(\mathbb{R}).$$
	 The explicit expression for the  associated eigenstate of the present example is given by  \eqref{gg13}. The action of the noncommutative Ruelle operator obtained from  a  general  constant potential is described in  Subsection \ref{some1} (see also \eqref{ute}).
	
	 Consider a given line-stochastic matrix \(P \in M_{2 \times 2}(\mathbb{R})\), such that:
	\begin{equation*}
	P = \begin{bmatrix}
	p_{11} & p_{12} \\
	p_{21} & p_{22}
	\end{bmatrix} \quad \text{, where } p_{ij} > 0 \text.
	\end{equation*}
	
	Define:
	\begin{equation*}
	P_1 = \begin{bmatrix}
	\sqrt{p_{11}} & 0 \\
	0 & \sqrt{p_{21}}
	\end{bmatrix} \text, \quad
	P_2 = \begin{bmatrix}
	\sqrt{p_{12}} & 0 \\
	0 & \sqrt{p_{22}}
	\end{bmatrix} \text,
	\end{equation*}
	and:
	\begin{equation*}
	K = 
	\begin{bmatrix}
	1 & 0 \\
	0 & 0
	\end{bmatrix} \otimes P_1
	+
	\begin{bmatrix}
	0 & 0 \\
	0 & 1
	\end{bmatrix} \otimes P_2 \text.
	\end{equation*}
	Notice that \(K = K^{*}\).
	
	Let
	\begin{equation*}
	x \in \Omega\to a(x) = \begin{bmatrix}
	a_{11}(x) & a_{12} (x)\\
	a_{21}(x) & a_{22}(x)
	\end{bmatrix}\in   M_{2 \times 2}(\mathbb{R}),
	\end{equation*}
	 and denote by \( \operatorname{Id} \) the identity matrix in \( M_{2 \times 2}(\mathbb{R}) \).  We will {\bf omit the dependence on $x\in \Omega $} in the next computations.
	We wish to consider a potential of the form \(a \mapsto \varepsilon(a \otimes \operatorname{Id})\), where:
	\begin{equation*}
	\varepsilon(a \otimes \operatorname{Id}) := \operatorname{Tr}_1 \left( K^{*} (a \otimes \operatorname{Id}) K \right) 
  \end{equation*}
  \begin{equation*}
	= \operatorname{Tr}_1 \left(
    \left( \begin{array}{l}
	\begin{bmatrix}
	1 & 0 \\
	0 & 0
	\end{bmatrix} \otimes P_1 + \\ +
	\begin{bmatrix}
	0 & 0 \\
	0 & 1
	\end{bmatrix} \otimes P_2
  \end{array} \right)
	(a \otimes \operatorname{Id})
  \left( \begin{array}{l}
	\begin{bmatrix}
	1 & 0 \\
	0 & 0
	\end{bmatrix} \otimes P_1 + \\ +
	\begin{bmatrix}
	0 & 0 \\
	0 & 1
	\end{bmatrix} \otimes P_2
  \end{array} \right)
	\right)
  \end{equation*}
  \begin{equation*}
	= \operatorname{Tr}_1 \left(
	\begin{bmatrix}
	1 & 0 \\
	0 & 0
	\end{bmatrix} a
	\begin{bmatrix}
	1 & 0 \\
	0 & 0
	\end{bmatrix} \otimes P_1^2
	+
	\begin{bmatrix}
	0 & 0 \\
	0 & 1
	\end{bmatrix} a
	\begin{bmatrix}
	0 & 0 \\
	0 & 1
	\end{bmatrix} \otimes P_2^2
	\right) 
  \end{equation*}
  \begin{equation}\label{gg9}
	= \operatorname{Tr}_1 \left(
	\begin{bmatrix}
	a_{11} & 0 \\
	0 & 0
	\end{bmatrix} \otimes P_1^2
	+
	\begin{bmatrix}
	0 & 0 \\
	0 & a_{22}
	\end{bmatrix} \otimes P_2^2
	\right) 
	= a_{11} P_1^2 + a_{22} P_2^2 \text.
	\end{equation}
	If \(a\) is a positive matrix, then \(a_{11},\, a_{22} \geq 0\), and whenever \(a \neq 0\), at least one of them must
	be strictly positive. Since both \(P_{1}^{2}\) and \(P_{2}^{2}\) are strictly positive, the linear mapping 
	\(a \mapsto \varepsilon(a \otimes \operatorname{Id})\) is positivity-improving, for each $x \in \Omega$. Also, since \(P\) was taken
	line-stochastic, the linear mapping \(a \mapsto \varepsilon(a \otimes \operatorname{Id})\) is unital.
	
	There exists a unique  vector of probability $\pi=(\pi_1,\pi_2)$ such that $\pi P =\pi.$ In this way,
	\begin{equation} \label{or1} \pi_1\, p_{11} + \pi_2 \,p_{21} = \pi_1, \end{equation}
	and
	\begin{equation} \label{or2}  \pi_1\, p_{12} + \pi_2 \,p_{22} = \pi_2.\end{equation}

	 In order to be consistent with the previous notation of Theorem \ref{thm:ruelle-normal}, applied to the present case, note  that the Ruelle operator acts on functions $g:\Omega\to  M_{2 \times 2}$, and by abuse of notation  let's write $g(a)(x) = a(x).$ Then, the  noncommutative Ruelle operator is 
	  \begin{equation} \label{gg10}L_\varphi (g)(x)=g_{11}(1 x) P_1^2 + g_{22} (2 x) P_2^2.
	  \end{equation}
	We want to find the eigenstate \(\eta\) for the Ruelle operator $L_\varphi$.  
	 First, we consider the state $\xi: M_{2 \times 2} \to \mathbb{R}$ such that when applied to a matrix $a(x).$
	 \begin{equation} \label{gg8}\xi (a(x)) = a_{11}(x) \pi_1 + a_{22}(x) \pi_2.
	 \end{equation}

$\xi: M_{2 \times 2} \to \mathbb{R}$ is the eigenstate for $a\in M_{2 \times 2}  \to \varepsilon(a \otimes \operatorname{Id})$. Indeed, for any matrix $a\in M_{2 \times 2} $, from \eqref{or1} and \eqref{or2}
	$$ \xi (\,  \varepsilon(a \otimes \operatorname{Id})\,)=\xi( \, a_{11} P_1^2 + a_{22} P_2^2\,)= \pi_1 ( a_{11} p_{11} +a_{22} p_{12}) + \pi_2 ( a_{11} p_{21} +a_{22} p_{22}  )=$$
	$$=a_{11}  (   \pi_1\, p_{11} + \pi_2 \,p_{21})\, + a_{22} (\pi_1\, p_{12} + \pi_2 \,p_{22}  )=  a_{11} \pi_1 + a_{22} \pi_2=\xi (a).$$
	
	It follows that for any $x$,
	\begin{equation} \label{gg11} \xi (\,  \varepsilon(a (x)\otimes \operatorname{Id})\,)=  a_{11}(x) \pi_1 + a_{22}(x)
	 \pi_2=\xi (a(x)).\end{equation}
	 
	  In order to be consistent the previous notation of Theorem \ref{thm:ruelle-normal}, we write the above expression as
\begin{equation} \label{gg12} \xi (\,  \varepsilon(g\otimes \operatorname{Id})\,)=  g_{11}(x) \pi_1 + g_{22}(x)
	 \pi_2=\xi (a(x)).\end{equation}
	 
	 Now we want to explicit  the eigenstate \(\eta\) for the Ruelle operator $L_\varphi$.
From Subsection \ref{some1} it follows that the eigenstate \(\eta\) for the transfer operator associated with the constant
  potential \(g \mapsto \varepsilon(g \otimes \operatorname{Id})\) is given by simply integrating \(\xi\) against the
  maximal entropy measure $\mu$; if \(g \in \mathcal{C}(\Omega; M_{2 \times 2}(\mathbb{R}))\):
  \begin{equation} \label{gg13}
    \eta(g) = \int (g_{11}(x) \,\pi_{1} + g_{22}(x) \,\pi_{2})\, \operatorname{d}\! \mu (x)\text.
  \end{equation}

\end{example}

\medskip

\section{Proof of Theorem \ref{thm:ruelle-normal}} \label{mma}

It will be of fundamental importance in our reasoning the use of two results, which are stated below for the
convenience of the reader. First, the Ionescu-Tulcea and Marinescu Theorem, as given in \cite[Theorem B.1.1]{finertherm}.

\begin{theorem*}[Ionescu-Tulcea and Marinescu]
	Let $E$ be a complex linear (i.e., vector) space and $B$ a linear subspace of $E$. Suppose that $E$ and $B$ are 
	Banach spaces when equipped with the norms $|\cdot|$ and $\|\cdot\|$, respectively. Suppose also that:
	\begin{itemize}
		\item[(a1)] If a sequence $(x_n)_{n=1}^{\infty}$ in $B$ is such that there exists a constant $K > 0$ with:
		\begin{equation*}
		\lim_{n \to \infty} |x_n - x| = 0 \quad \text{for some } x \in E
		\quad \text{and} \quad \|x_n\| \leq K \text{ for all } n \in \mathbb{N},
		\end{equation*}
		then:
		\begin{equation*}
		x \in B \quad \text{and} \quad \|x\| \leq K.
		\end{equation*}
	\end{itemize}
	
	Furthermore, let $T : B \to B$ be a bounded linear operator satisfying the following conditions:
	\begin{itemize}
		\item[(b1)] There exists a constant $H > 0$ such that $|T^n| \leq H$ for all $n \in \mathbb{N}$.
		
		\item[(c1)] There exist $N \in \mathbb{N}$ and constants $0 < r < 1$ and $R > 0$ such that:
		\begin{equation*}
		|T^N(x)| \leq r \|x\| + R |x|, \quad \forall x \in B.
		\end{equation*}
		
		\item[(d1)] Every bounded subset of $B$ is mapped by $T$ to a relatively compact subset of $E$.
		That is, the operator $T : (B, \|\cdot\|) \to (E, |\cdot|)$ is compact.
	\end{itemize}
	
	Then:
	\begin{itemize}
		\item[(a2)] There are only a finite number of eigenvalues of $T : B \to B$ with modulus equal to $1$. We denote them by $\lambda_1, \ldots, \lambda_p \in \mathbb{S}^1 \subseteq \mathbb{C}$.
		
		\item[(b2)] There exist operators $T_1, \ldots, T_p, S$, all satisfying properties (b1), (c1), and (d1), such that:
		\begin{equation*}
		T^n = \sum_{i=1}^{p} \lambda_i^n T_i + S^n, \quad \forall n \in \mathbb{N},
		\end{equation*}
		where $\dim(T_i(B)) < \infty$ for every $1 \leq i \leq p$.
		
		\item[(c2)] $T_i^2 = T_i$, \quad $T_i \circ T_j = 0$ for $i \neq j$, \quad and \quad $T_i \circ S = S \circ T_i = 0$ \linebreak for each $1 \leq i,j \leq p$.
		
		\item[(d2)] $T \circ T_i = T_i \circ T = \lambda_i T_i$ for every $1 \leq i \leq p$.
		
		\item[(e2)] $r(S) < 1$, where $r(S)$ is the spectral radius of $S : B \to B$.
		
		\item[(f2)] Addendum: $\sigma(T) \setminus \{ \lambda_1, \ldots, \lambda_p \} \subset \sigma(S) \subset B(0, r(S)) \subset \mathbb{C}$, where $\sigma(T)$ denotes the spectrum of $T$. This property is called the \emph{spectral gap} of $T$.
	\end{itemize}
\end{theorem*}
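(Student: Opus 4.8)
The plan is to show that $T$ is \emph{quasi-compact} on $(B,\norm{\cdot})$ and then read off the peripheral spectral data from the Riesz decomposition theory of such operators. The whole argument is driven by the two-norm interplay: (c1) contracts the strong norm $\norm{\cdot}$ up to a weak-norm $\abs{\cdot}$ error, while (d1) makes $T$ compact \emph{in the weak norm only}, and (a1) glues the two together.

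The first step is to iterate the Doeblin--Fortet inequality (c1). Applying it to $T^{(k-1)N}x$ and bounding the weak norm of the iterate by (b1), $\abs{T^{(k-1)N}x}\le H\abs{x}$, gives the recursion $a_k \le r\,a_{k-1}+RH\abs{x}$ for $a_k:=\norm{T^{kN}x}$, which unwinds to
\begin{equation*}
\norm{T^{kN}x}\le r^{k}\norm{x}+\frac{RH}{1-r}\,\abs{x},\qquad \forall\,x\in B,\ \forall\,k\in\mathbb{N}.
\end{equation*}
This is the central a priori estimate: the coefficient $r^{k}\to 0$ isolates the genuinely contracting part, the remainder being controlled by $\abs{\cdot}$.

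The crux, which I expect to be the main obstacle, is converting this estimate into a bound on the essential spectral radius. Since (d1) only gives compactness of $T:(B,\norm{\cdot})\to(E,\abs{\cdot})$ --- in general $T$ is \emph{not} compact as a self-map of $(B,\norm{\cdot})$ --- one cannot appeal to classical compact-operator theory directly. Instead I would invoke the Nussbaum--Hennion formula: for an operator obeying $\norm{T^{n}x}\le r_n\norm{x}+R_n\abs{x}$ and sending $\norm{\cdot}$-bounded sets to $\abs{\cdot}$-precompact sets, one has $r_{\mathrm{ess}}(T)\le\liminf_n r_n^{1/n}$. Taking $n=kN$ and $r_n=r^{k}$ yields $r_{\mathrm{ess}}(T)\le r^{1/N}<1$. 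The delicate point is verifying the hypotheses of this formula in the present two-norm framework, where condition (a1) is precisely what ensures that $\norm{\cdot}$-balls are closed in $(E,\abs{\cdot})$, so that the relevant measures of non-compactness behave as required.

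Once $r_{\mathrm{ess}}(T)<1$ is established, the remaining conclusions follow from the standard Riesz theory of quasi-compact operators. Every spectral point of modulus $>r_{\mathrm{ess}}$ is an isolated eigenvalue of finite algebraic multiplicity, and (b1) pins all eigenvalues inside the closed unit disk, since for an eigenvector $x$ one has $\abs{\lambda}^{n}\abs{x}=\abs{T^{n}x}\le H\abs{x}$ with $\abs{x}>0$; hence the eigenvalues of modulus $1$ are finitely many, proving (a2). For each $\lambda_i$ I would take the Riesz projection $T_i:=\frac{1}{2\pi i}\oint_{\gamma_i}(zI-T)^{-1}\,dz$ over a small contour isolating $\lambda_i$; these are commuting idempotents with $T_iT_j=0$ for $i\ne j$ and finite-dimensional ranges $T_i(B)$, and $S:=T\bigl(I-\sum_i T_i\bigr)$ is the complementary part, giving (b2) and (c2). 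The semisimplicity (d2), $T\,T_i=\lambda_i T_i$, is where (b1) re-enters: on the finite-dimensional space $T_i(B)$ the norms $\norm{\cdot}$ and $\abs{\cdot}$ are equivalent, so a nontrivial nilpotent part of $T|_{T_i(B)}$ would make $\abs{T^{n}}$ grow polynomially in $n$, contradicting $\abs{T^{n}}\le H$; thus $T|_{T_i(B)}=\lambda_i\,\mathrm{Id}$. Finally $r(S)<1$ since $\sigma(S)$ collects the part of $\sigma(T)$ off the unit circle, which lies in an open disk of radius $<1$, yielding (e2) and the spectral-gap addendum (f2); it then remains only to check --- routinely, since they are obtained from $T$ through the holomorphic functional calculus --- that each $T_i$ and $S$ again satisfy (b1)--(d1).
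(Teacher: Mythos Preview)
The paper does not actually prove the Ionescu--Tulcea--Marinescu theorem: it is quoted verbatim from \cite[Theorem~B.1.1]{finertherm} as a black box, alongside the Gl\"uck--Weber theorem, and the paper's own work in Section~\ref{mma} consists solely of verifying the hypotheses (a1)--(d1) for the concrete operator $L_{\varphi}$ on $E=\mathcal{C}(\Omega_A;\mathcal{A})$, $B=\mathcal{C}_\theta(\Omega_A;\mathcal{A})$. So there is no in-paper proof of this statement for your attempt to be compared against.

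That said, your outline is essentially the standard Hennion route to ITM and is broadly sound. Two small points worth tightening. First, note that the paper's displayed (c1) reads $|T^{N}(x)|\le r\|x\|+R|x|$, with the \emph{weak} norm on the left; this is almost certainly a transcription slip, since the paper verifies (c1) by proving $\norm{L_\varphi^{n}g}_\theta\le (C+1)\abs{g}_\infty+\theta^{n}\norm{g}_\theta$, i.e.\ with the strong norm on the left, which is what your iteration needs. Second, your argument that $|\lambda|\le 1$ for eigenvalues uses $|x|>0$ for an eigenvector $x\in B$; the hypotheses do not literally forbid $|x|=0$, but in that case the iterated Lasota--Yorke inequality gives $\|T^{kN}x\|\le r^{k}\|x\|\to 0$, which already rules out $|\lambda|\ge 1$. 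The remaining steps (Nussbaum--Hennion for $r_{\mathrm{ess}}(T)<1$, Riesz projections, semisimplicity on the peripheral spectrum via the uniform bound (b1)) are the right ingredients, though the claim that each $T_i$ and $S$ again satisfy (b1)--(d1) is not entirely ``routine'' and deserves a line of justification if you intend a self-contained proof.
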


Moreover, we will also need  Theorem 4.1 from \cite{glwe}.

\begin{theorem*}[Glück and Weber]
	Let $X$ be an ordered Banach space with positive cone $X_+ \neq \{0\}$,
	let $J = \mathbb{N}_0$ or $J = [0, \infty)$, and let $T = (T_t)_{t \in J}$ be a positive operator semigroup on
	$X$. Suppose that the following two assumptions are satisfied:
	\begin{itemize}
		\item[(a)] For each vector $0 \neq x \in X_+$, there exists a time $t_x \in J$ such that $T_{t_x}x$ is an almost interior point of $X_+$.
		
		\item[(b)] The semigroup $T$ is bounded and strongly asymptotically compact.
	\end{itemize}
	
	Then $T_t$ converges strongly as $t \to \infty$. If the limit operator $Q := \lim_{t \to \infty} T_t$ is not
	zero, then it is of the form $Q = y' \otimes y$. Here, $y$ is an almost interior point of $X_+$
	and a fixed point of $T$, and $y' \in X'$ is a strictly positive functional and a fixed point
	of the adjoint semigroup $T' := (T_t')_{t \in J}$ (consisting of the dual operators $T_t'$).
\end{theorem*}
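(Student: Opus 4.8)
The plan is to study the long-term behaviour of $(T_t)_{t \in J}$ through its \emph{semigroup at infinity}, namely the set $\mathcal{S}_\infty$ of all operators arising as strong-operator-topology limits of nets $(T_{t_\alpha})$ with $t_\alpha \to \infty$. First I would use the two standing hypotheses to make this object tractable. Boundedness in (b) yields $\sup_{t} \|T_t\| =: M < \infty$, so every element of $\mathcal{S}_\infty$ has norm at most $M$; strong asymptotic compactness — the relative compactness of the orbit tails $\{T_t x : t \geq t_0\}$ for each $x$ — guarantees that such limits exist along subnets, so that $\mathcal{S}_\infty \neq \emptyset$, and that $\mathcal{S}_\infty$ is compact for the strong operator topology on the relevant orbit closures. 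The semigroup law $T_s T_t = T_{s+t}$ passes to the limit, making $\mathcal{S}_\infty$ a compact right-topological semigroup. The desired strong convergence of $(T_t)$ is equivalent to the assertion that $\mathcal{S}_\infty$ is a singleton $\{Q\}$.

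The next step is to extract algebraic structure. By the standard structure theory of compact topological semigroups (the Jacobs--de Leeuw--Glicksberg machinery: the minimal ideal is a disjoint union of isomorphic groups and contains idempotents), $\mathcal{S}_\infty$ contains a minimal idempotent $P$, and the kernel ideal $\mathcal{K} \ni P$ is a compact group with identity $P$. Positivity is inherited by limits: each $T_t \geq 0$ forces every element of $\mathcal{S}_\infty$ — in particular $P$ and every member of $\mathcal{K}$ together with its group inverse — to be a positive operator. Thus $\mathcal{K}$ is a compact group of positive operators with positive inverses acting on $Y := P X$, i.e. a group of order-isomorphisms of the induced cone $Y \cap X_+$. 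If $P = 0$ then $\mathcal{S}_\infty = \{0\}$ and $T_t \to 0$ strongly, so it remains to treat the case $P \neq 0$.

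The crux of the argument — and the step I expect to be the main obstacle — is to use hypothesis (a) to force $\dim Y \leq 1$ and $\mathcal{K} = \{P\}$. Here \emph{almost interior} (quasi-interior) points are essential: recall that $0 \neq u \in X_+$ is almost interior precisely when $\langle x', u \rangle > 0$ for every $0 \neq x' \in X_+'$, which is the correct substitute for cone-interiority in the absence of a lattice structure or of genuine interior points. I would first argue that the limit projection inherits a strong positivity property from (a): combining the semigroup law with positivity of all limit operators, one shows that $P x$ is almost interior (in particular nonzero) for every $0 \neq x \in X_+$. A Perron--Frobenius-type rigidity argument then applies: were $Y$ to contain two linearly independent fixed directions, or $\mathcal{K}$ an operator other than $P$ (a peripheral eigenvalue $\lambda \neq 1$), positivity of all the limit operators would produce a nonzero positive functional annihilating an almost interior vector, contradicting the defining inequality of almost-interiority. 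Making this rigidity precise in a general ordered Banach space, without moduli or lattice operations, is exactly the delicate point, and is where the almost-interior hypothesis does the work that strict positivity of the potential does in the classical Ruelle theorem.

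Finally I would assemble the conclusions. Triviality of $\mathcal{K}$ together with the one-dimensionality $Y = P X = \mathrm{span}(y)$, where $y$ is the almost interior fixed vector, forces $\mathcal{S}_\infty = \{P\}$, so $T_t \to Q := P$ strongly. Being a rank-one positive projection onto $\mathrm{span}(y)$, $Q$ takes the form $Q = y' \otimes y$ for a positive functional $y'$ normalized by $\langle y', y\rangle = 1$; that $y$ is a fixed point of $T$ and $y'$ a fixed point of the adjoint semigroup $T' = (T_t')$ follows by passing to limits in $T_t Q = Q T_t = Q$, and strict positivity of $y'$ is dual to almost-interiority of $y$ (for $0 \neq x \in X_+$, $\langle y', x\rangle y = Qx$ is almost interior, hence nonzero, giving $\langle y', x\rangle > 0$). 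In the setting of the present paper this abstract theorem is applied with $X = \mathcal{C}(\Omega_A;\mathcal{A})$ and $X_+$ its positive cone, where the normalized positivity-improving potential supplies hypothesis (a) and the spectral gap afforded by the Ionescu-Tulcea and Marinescu theorem supplies the strong asymptotic compactness required in (b).
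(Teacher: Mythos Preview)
The paper does not prove this statement at all: the Gl\"uck--Weber theorem is quoted verbatim from \cite[Theorem~4.1]{glwe} as a black-box tool, alongside the Ionescu-Tulcea--Marinescu theorem, and is then \emph{applied} (not established) in the proof of Theorem~\ref{thm:ruelle-normal}. So there is no ``paper's own proof'' to compare against. Your sketch via the semigroup at infinity and the Jacobs--de~Leeuw--Glicksberg structure theory is a plausible outline of how such results are typically proved, and is broadly in the spirit of the argument in the original Gl\"uck--Weber preprint; but any assessment of its correctness would have to be made against \cite{glwe}, not against the present paper.

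One small correction regarding your final paragraph: in the application to \(X = \mathcal{C}(\Omega_A;\mathcal{A})\), the paper does \emph{not} verify hypothesis~(b) via the spectral gap from Ionescu-Tulcea--Marinescu. Rather, strong asymptotic compactness is obtained directly from the basic inequality \(\abs{L_\varphi^n g}_\theta \leq C\abs{g}_\infty + \theta^n \abs{g}_\theta\) (equation~\eqref{eq:basic}), which gives equicontinuity of the orbit \((L_\varphi^n g)_n\), together with the bound \(\norm{L_\varphi} = 1\) and Arzel\`a--Ascoli. The spectral gap is a \emph{conclusion} of the Ionescu-Tulcea--Marinescu theorem used elsewhere in the proof, not the input to Gl\"uck--Weber.
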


\begin{remark}
	Recall that \(x \in X_+\) is called an \textit{almost interior point} if it is such that \(\mu(x) > 0\) for every
	non-zero positive functional \(\mu\) \cite[Definition 2.4 (ii)]{glwe}. Also, the operator semigroup \(T\) is said
	to be \textit{strongly asymptotically compact} if for each \(x \in X\) and each sequence $(t_n)_{n \in \mathbb{N}} \subseteq J$ 
	that converges to $\infty$, the sequence $(T_{t_n} x)_{n \in \mathbb{N}}$ in $X$ has a convergent subsequence 
	\cite[Definition 3.1]{glwe}. If convenient, the reader can  explore more details in reference \cite{glwe}  to gain 
	familiarity with these definitions and Theorem 4.1 therein.
\end{remark}

\begin{remark}
	We want to show that \cite[Theorem B.1.1]{finertherm} applies to \(E = \mathcal{C}(\Omega; \mathcal{A})\), \(B = \mathcal{C}_{\theta}(\Omega; \mathcal{A})\),
	and \(T = L_{\varphi}\). Also, that \cite[Theorem 4.1]{glwe} applies to \linebreak \(X = \mathcal{C}(\Omega; \mathcal{A})\), 
	\(X_{+} = \mathcal{C}(\Omega; \mathcal{A})^{+}\), and \(T_{t} = L_{\varphi}^{n}\).
\end{remark}

For the proof of our main result, let us begin by proving hypothesis (b1) of \cite[Theorem B.1.1]{finertherm}. Since each
\(\varphi_{ix}\) is positivity-improving, and particularly positive, then so is their sum. But beware: this only
ensures \(\sum_{i} \varphi_{ix}\) to be positivity-improving for every \(x \in \Omega_{A}\), which is not enough to
ensure the linear operator \(L_{\varphi} : \mathcal{C}(\Omega_{A}; \mathcal{A}) \to \mathcal{C}(\Omega_{A}; \mathcal{A})\)
to be positivity-improving. 

We will see later that \(L_{\varphi}\) is merely ``eventually'' positivity-improving,
in a sense to be made precise then; this will be sufficient. For now, it is enough to know that each \(\varphi_{x}\) being positive for every \(x
\in \Omega_{A}\), the linear operator \(L_{\varphi}\) follows to be positive. Therefore, we may employ \cite[Corollary 2.9 (Russo-Dye)]{paul}
to conclude that:
\begin{equation*}
\norm{L_{\varphi}} = \abs{L_{\varphi}(\mathbb{I}_{\mathcal{C}})}_{\infty} = \sup_{x \in \Omega_{A}} \abs{\mathbb{I}_{\mathcal{A}}}_{\mathcal{A}} = 1 \text.
\end{equation*}

Now, we pass to the issue of showing \(L_{\varphi}\) is ``weakly contracting for Lipschitz functions'', in the sense that inequality 
\eqref{eq:itineq} holds. This corresponds to showing \(L_{\varphi}\) satisfies hypothesis (c1) of \cite[Theorem B.1.1]{finertherm}. 
To do so, first, we prove the basic inequality \cite[Proposition 2.1]{papo}. We consider two cases:

\bigskip \noindent \textbf{Case 1.} \textit{Say \(d(x,y) > \theta\). If this holds, then:
	\begin{equation*}
	\abs{(L_{\varphi} g)(x) - (L_{\varphi} g)(y)}_{\mathcal{A}} \leq 2 \norm{L_{\varphi}} \abs{g}_{\infty} \leq
	2 \abs{g}_{\infty} \frac{d(x,y)}{\theta} \text.
	\end{equation*}}

\bigskip \noindent \textbf{Case 2.} \textit{Say \(d(x,y) \leq \theta\). If this holds instead, then \(x_{1} = y_{1}\), so that:
	\begin{equation*}
	\abs{(L_{\varphi} g)(x) - (L_{\varphi} g)(y)}_{\mathcal{A}} \leq \sum_{\substack{i \in \Sigma \\ A(i,x_{1}) = 1}}
	\abs{\varphi_{ix} \big( g(ix) \big) - \varphi_{iy} \big( g(iy) \big)}_{\mathcal{A}} 
  \end{equation*}
	\begin{equation*}
	\leq \sum_{i} \norm{\varphi_{ix} - \varphi_{iy}} \abs{g(ix)}_{\mathcal{A}}
	+ \abs{\sum_{i} \varphi_{iy} \left( g(ix) - g(iy) \right)}_{\mathcal{A}} 
  \end{equation*}
	\begin{equation*}
	\leq \sum_{i} \norm{\varphi_{ix} - \varphi_{iy}} \abs{g(ix)}_{\mathcal{A}}
	+ \abs{\sum_{i} \varphi_{iy} \left( \abs{g(ix) - g(iy)}_{\mathcal{A}} \mathbb{I}_{\mathcal{A}} \right)}_{\mathcal{A}} 
  \end{equation*}
	\begin{equation*}
	\leq \sum_{i} \norm{\varphi_{ix} - \varphi_{iy}} \abs{g(ix)}_{\mathcal{A}}
	+ \abs{\sum_{i} \varphi_{iy} \left( d(ix, iy) \abs{g}_{\theta} \mathbb{I}_{\mathcal{A}} \right)}_{\mathcal{A}} 
  \end{equation*}
	\begin{equation*}
	\leq k \abs{\varphi}_{\theta} \theta d(x, y) \abs{g}_{\infty} + \theta d(x, y) \abs{g}_{\theta} \text.
	\end{equation*}} 

Define \(C_{1} := C_{1}(\varphi, \theta) := \max \left\{k \theta \abs{\varphi}_{\theta} , \frac{2}{\theta}\right\}\). 
Combining both cases, it follows that:
\begin{equation*}
\abs{L_{\varphi} g}_{\theta} \leq C_{1} \abs{g}_{\infty} + \theta \abs{g}_{\theta} \text.
\end{equation*}

Proceeding by induction, assume \(\abs{L_{\varphi}^{n} g}_{\theta} \leq C_{n} \abs{g}_{\infty} + \theta^{n} \abs{g}_{\theta}\).
Then:
\begin{equation*}
\abs{L_{\varphi}^{n+1} g}_{\theta} = \abs{L_{\varphi}^{n} \left( L_{\varphi} g\right)}_{\theta} \leq 
C_{n} \abs{L_{\varphi} g}_{\infty} + \theta^{n} \abs{L_{\varphi} g}_{\theta} 
\end{equation*}
\begin{equation*}
 \leq
C_{n} \abs{L_{\varphi} g}_{\infty} +  \theta^{n} \left( C_{1} \abs{g}_{\infty} + \theta \abs{g}_{\theta} \right) 
\end{equation*}
\begin{equation*}
\leq
\left( C_{n} \norm{L_{\varphi}} + \theta^{n} C_{1} \right) \abs{g}_{\infty} + \theta^{n+1} \abs{g}_{\theta} 
\end{equation*}
\begin{equation*}
\leq
C_{n+1} \abs{g}_{\infty} + \theta^{n+1} \abs{g}_{\theta} \text.
\end{equation*}

Since \(\theta < 1\), we may recursively choose \(C_{n+1}\) as:
\begin{equation*}
C_{n+1} := C_{n} + \theta^{n} C_{1} = \sum_{m = 0}^{n} \theta^{m} C_{1}  
= \frac{1 - \theta^{n+1}}{1 - \theta} C_{1} 
\end{equation*}
\begin{equation*}
\leq \frac{C_{1}}{1 - \theta} := C(\varphi, \theta) := C \text.
\end{equation*}

Therefore:
\begin{equation} \label{eq:basic}
\abs{L_{\varphi}^{n} g}_{\theta} \leq C \abs{g}_{\infty} + \theta^{n} \abs{g}_{\theta} \text.
\end{equation}

Second, we infer the Ionescu-Tulcea and Marinescu Inequality \cite[Corollary 13.8.16]{finertherm} by observing that:
\begin{equation*} 
\norm{L_{\varphi}^{n} g}_{\theta} := \abs{L_{\varphi}^{n} g}_{\infty} + \abs{L_{\varphi}^{n} g}_{\theta} 
\leq \norm{L_{\varphi}^{n}} \abs{g}_{\infty} + C \abs{g}_{\infty} + \theta^{n} \abs{g}_{\theta} 
\end{equation*}
\begin{equation} \label{eq:itineq}
\leq \left(C + 1\right) \abs{g}_{\infty} + \theta^{n} \norm{g}_{\theta} \text.
\end{equation}

The hypothesis (d1) of \cite[Theorem B.1.1]{finertherm} follows from \cite[Remark B.1.2]{finertherm} plus Remark \ref{re:arz-asc}.

Finally, the hypothesis (a1) of \cite[Theorem B.1.1]{finertherm} can be verified directly: let
\((g_{n})_{n \in \mathbb{N}}\) be a sequence of Lipschitz-continuous functions \(g_{n} \in \mathcal{C}_{\theta}(\Omega_{A}; \mathcal{A})\)
such that \(\norm{g_{n}}_{\theta} \leq K\) for every \(n \in \mathbb{N}\) and some \(K \in \mathbb{R}\). Suppose \(g_{n}\)
converges uniformly to \(g\), that is:
\begin{equation*}
\lim_{n \to +\infty} \abs{g_{n} - g}_{\infty} = 0 \text.
\end{equation*}

Observe that \(g \in \mathcal{C}_{\theta}(\Omega_{A}; \mathcal{A})\) and that the Lipschitz norm of \(g\) is bounded above by
\(K\), because:
\begin{equation*}
\norm{g}_{\theta} := \abs{g}_{\infty} + \abs{g}_{\theta} = \lim_{n \to +\infty} \left( \abs{g_{n}}_{\infty} \right)
+ \sup_{x \neq y \in \Omega_{A}} \frac{\abs{g(x) - g(y)}_{\mathcal{A}}}{d_{\theta}(x,y)} 
\end{equation*}
\begin{equation*}
\leq \lim_{n \to +\infty} \left( \abs{g_{n}}_{\infty} \right)
+ \sup_{x \neq y \in \Omega_{A}} \liminf_{n \to +\infty} \left( \frac{\abs{g_{n}(x) - g_{n}(y)}_{\mathcal{A}}}{d_{\theta}(x,y)} \right) 
\end{equation*}
\begin{equation*}
\leq \lim_{n \to +\infty} \left( \abs{g_{n}}_{\infty} \right)
+ \liminf_{n \to +\infty} \left( \abs{g_{n}}_{\theta} \right) 
= \liminf_{n \to +\infty} \left( \abs{g_{n}}_{\infty} + \abs{g_{n}}_{\theta} \right) 
\end{equation*}
\begin{equation*}
= \liminf_{n \to +\infty} \norm{g_{n}}_{\theta} 
\leq K \text.
\end{equation*}

This means we are all set to apply \cite[Theorem B.1.1]{finertherm}. So, we already know \(1 \in \mathbb{R}\) is an isolated eigenvalue
of \(L_{\varphi}\). To see that it is simple, suppose there is some \(g \in \mathcal{C}(\Omega_{A}; \mathcal{A})\) such that
\(L_{\varphi} g = g\), that is, \(g \neq 0\) is an eigenfunction corresponding to the eigenvalue \(1\). By uniquely
decomposing each \(a \in \mathcal{A}\) (and, simultaneously, each \(g \in \mathcal{C}(\Omega_{A}; \mathcal{A})\)) into positive elements 
(and, simultaneously, into positive functions) as per \cite[Lemma 4.38]{bp-qsm}, and noticing that \(L_{\varphi}\) 
leaves the set of positive functions invariant, we may assume, without loss of generality, that \(g \geq 0\).

Being a fixed point of \(L_{\varphi}\), we conclude that \(g > 0\). This happens because:
\begin{align*}
& & g(x) = (L_{\varphi}g)(x) = \sum_{i} \varphi_{ix} g(ix) \text{, and:} \\ 
& & g(x) \geq 0 \text, \quad \forall x \in \Omega \\
& \implies & g(ix) \geq 0 \text, \quad \forall ix \in \Omega \\ 
& \implies & \left( \varphi_{ix} g(ix) > 0 \text{, or: } g(ix) = 0 \right) \quad \forall ix \in \Omega \text,
\end{align*}
and the possibility that \(g(ix) = 0\) for any (some) \(ix \in \Omega\) can be ruled out because it implies \(g(y) = 0\) 
for every \(y \in \cup_{n \in \mathbb{N}} \sigma^{-n}(ix)\), and by continuity, for every \(x \in \Omega_{A}\);
contradicting \(g \neq 0\). It follows that \(\varphi_{ix} g(ix) > 0\) for every \(ix \in \Omega\), and, adding
these, that \(g > 0\).

Consider the finite constant \(\mathbb{R} \ni s > 0\) given by:
\begin{equation*}
s := \sup \left\{t \in \mathbb{R} \, \middle| \, t \mathbb{I}_{\mathcal{C}} \leq g\right\} \text.
\end{equation*}
Notice that \(s\) is bounded above, for instance, by any finite upper bound for \(g\), and bounded below, for instance, 
by any positive lower bound for \(g\). Notice also that \(g - s \mathbb{I}_{\mathcal{C}} \geq 0\), but \textbf{not} \(g - s \mathbb{I}_{\mathcal{C}} > 0\). 
But, being a linear combination of two fixed points of \(L_{\varphi}\), \(g - s \mathbb{I}_{\mathcal{C}}\), it is also a fixed point
of \(L_{\varphi}\). We have just seen that any non-zero positive fixed points of \(L_{\varphi}\) must be 
strictly positive. Thus, \(g - s \mathbb{I}_{\mathcal{C}}\) must be zero, that is, \(g = s \mathbb{I}_{\mathcal{C}}\).

Regarding the dual space \(\mathcal{C}(\Omega_{A}; \mathcal{A})^{\prime}\), and the corresponding dual operator
\(L_{\varphi}^{\prime}\), we remark that the set:
\begin{equation*}
\mathcal{S}\left(\mathcal{C}(\Omega_{A}; \mathcal{A})\right) := \left\{\eta \in \mathcal{C}(\Omega_{A}; \mathcal{A})^{\prime} \middle| \norm{\eta} = \eta(\mathbb{I}_{\mathcal{C}}) = 1\right\} \text,
\end{equation*}
and its subset:
\begin{equation*}
\mathcal{S}_{\sigma}(\mathcal{C}(\Omega_{A}; \mathcal{A})) := \left\{\eta \in \mathcal{S}\left(\mathcal{C}(\Omega_{A}; \mathcal{A})\right) \middle| \sigma_{\sharp}(\eta) = \eta\right\} \text,
\end{equation*}
where \(\sigma_{\sharp}\) denotes the push-forward mapping, defined by:
\begin{equation*}
(\sigma_{\sharp}\eta)(g) := \eta (g \circ \sigma) \text,
\end{equation*}
are both weak*-compact and invariant by \(L_{\varphi}^{\prime}\). Therefore, there is a \(\sigma\)-invariant state
\(\eta_{\varphi}\) such that \(L_{\varphi}^{\prime}(\eta_{\varphi}) = \eta_{\varphi}\). We now see that it 
is unique and characterized by item \ref{thmitm:unilim} of Theorem \ref{thm:ruelle-normal} by employing \cite[Theorem 4.1]{glwe}
as follows:

The basic inequality \eqref{eq:basic} shows the sequence \((L_{\varphi}^{n}(g))_{n \in \mathbb{N}}\) to be 
equicontinuous for every \(g \in \mathcal{C}(\Omega_{A}; \mathcal{A})\). Since \(\norm{L_{\varphi}} = 1\), the sequence 
\((L_{\varphi}^{n}(g))_{n \in \mathbb{N}}\) is also bounded for every \(g \in \mathcal{C}(\Omega_{A}; \mathcal{A})\). 
Combining all  these with the Arzelà-Ascoli Theorem, it follows that hypothesis (b) of \cite[Theorem 4.1]{glwe} is satisfied.

Now say that \(g \in \mathcal{C}(\Omega_{A}; \mathcal{A})^{+}\) and \(g \neq 0\). This means there is some \(x \in \Omega_{A}\)
such that \(g(x) \neq 0\). By continuity, there is an open set \(U \subseteq \Omega_{A}\) for which \(0 \notin g(U)\). 
Since \(A\) is aperiodic, there exists an \(N = N(U) \in \mathbb{N}\) such that \(\sigma^{-n}(x) \cap U \neq \emptyset\),
for any \(x \in \Omega_{A}\), and \(n \geq N\).
Then, \(L_{\varphi}^{n}(g) > 0\), since, for any \(x \in \Omega_{A}\):
\begin{equation*}
(L_{\varphi}^{n}g)(x) = \sum_{y \in \sigma^{-n}(x)}  \left(\prod_{i = 1}^{n} \varphi_{\sigma^{i}(y)}\right) g(y) \text,
\end{equation*}
and \(g(y) \neq 0\) for at least one \(y \in \sigma^{-n}(x)\). This shows that hypothesis (a) of \cite[Theorem 4.1]{glwe}
is also satisfied. It follows that \(L_{\varphi}^{n}(g)\) converges uniformly to \(\eta_{\varphi}(g)
\mathbb{I}_{\mathcal{C}}\), and also that \(\eta_{\varphi}\) is a faithful state \cite[Definition 4.61]{bp-qsm}, that is, strictly positive \cite[Definition 2.4]{glwe}.

To see that there is no other unitary eigenvalue of \(L_{\varphi}\), suppose the contrary: that there is 
\(g \in \mathcal{C}(\Omega_{A}; \mathcal{A})\) such that \(g \neq 0\), \(L_{\varphi} g = \lambda g\), \(\lambda \in \mathbb{R}\), 
\(\abs{\lambda} = 1\), \(\lambda \neq 1\). Notice: 
\begin{equation} \label{eq:etag-zero}
\eta_{\varphi}(g) = \left(L_{\varphi}^{\prime} \eta_{\varphi}\right)(g)
= \eta_{\varphi}(L_{\varphi} g)
= \lambda \eta_{\varphi}(g) \text,
\end{equation}
so that \(\eta_{\varphi}(g) = 0\). It follows that \(\lim_{n} L_{\varphi}^{n} g = \lambda^{n} g \equiv 0\),
which, together with \(\abs{\lambda} = 1\), implies that \(g = 0\), a contradiction.

Finally, to see that any positive eigenfunction of \(L_{\varphi}\) must be a positive multiple of \(\mathbb{I}_{\mathcal{C}}\),
suppose the contrary: that there is \(g \in \mathcal{C}(\Omega_{A}; \mathcal{A})^{+}\) such that \(g \neq 0\), \(L_{\varphi} g = \lambda g\), 
\(\lambda \in \mathbb{R}\), \(\abs{\lambda} < 1\). Repeating the reasoning of \eqref{eq:etag-zero}, we again conclude 
that \(\eta_{\varphi}(g) = 0\). Since \(\eta_{\varphi}\) is faithful (strictly positive) and \(g\) is positive, it
follows that \(g = 0\), a contradiction.

\section{Entropy for a certain family of potentials} \label{Pre}

In the classical thermodynamic formalism, the entropy \(h_{\mu}(\sigma)\) of an equilibrium state \(\mu\) for a 
normalized potential \(B : \Omega \to \mathbb{R}\) admits the expression:
\begin{equation*}
h_{\mu}(\sigma) = - \int \log J_{\mu} \operatorname{d}\! \mu = - \int B \operatorname{d}\! \mu \text,
\end{equation*}
where \(J_{\mu}: \Omega \to (0,1)\) denotes the so called Jacobian of \(\mu\) with respect to the shift dynamics \(\sigma\) (see \cite{olivi-fte}, \cite{craizer-m} or Section 3 in \cite{lopes-nft}). This follows from 
the variational principle of pressure and the normalization condition, and \(J_{\mu}\) can be interpreted as the point-wise expansion 
factor of the measure (or as a kind of Radon-Nikodym derivative in inverse branches, as described in \cite{craizer-m,} or in  Section 7 in \cite{lopes-nft}).

\begin{definition}
	In our \(\mathcal{A}\)-valued setting, for a normalized potential \(\varphi \in \mathcal{C}(\Omega_{A}; \mathfrak{L}(\mathcal{A}))\), 
	it is natural to define the point-wise Jacobian $J$ as the \(\mathcal{A}\)-valued function:
	\begin{equation*}
	J(x) := \varphi_{x}(\mathbb{I}_{\mathcal{A}}) \text.
	\end{equation*}
	It is a generalization of the previuous concept of Jacobian.
\end{definition}

\begin{remark}
	Because of \cite[Corollary 2.9 (Russo-Dye)]{paul}, this definition implies the equality \(\abs{J(x)} := \norm{\varphi_{x}}\).

\end{remark}

\begin{definition} \label{efen}
	Given an eigenstate \(\eta\) of the transfer operator \(L_{\varphi}\), we propose the following ansatz for the entropy of \(\eta\):
	\begin{equation*}
	h(\varphi,\eta,\sigma) := - \eta(\log J) \text.
	\end{equation*}
\end{definition}

\begin{remark}
	Since \(\varphi_{x}\) is positivity-improving, the logarithm expression \(\log J\) is point-wise well defined. The property of it 
  being continuous follows from the continuity of \(\varphi : \Omega_{A} \to \mathfrak{L}(\mathcal{A})\), plus the compactness of
	\(\Omega_{A}\). Therefore, it makes sense to evaluate \(\eta\) at \(\log J\).
\end{remark}

\begin{remark} 
  In the classical (scalar) setting, the Kolmogorov-Sinai entropy is a functional of the invariant measure alone. In 
  our noncommutative framework the quantity \(h(\varphi,\eta,\sigma)\) depends on \emph{both} the eigenstate \(\eta\) 
  and the choice of normalized \(\mathfrak{L}(\mathcal A)\)-valued potential \(\varphi\). Distinct normalized potentials 
  may share the same \(\eta\) but yield different \(J_{\varphi}\) and hence different entropies. 
\end{remark}

\begin{example}
  Let \(\Omega_{A} = \left\{1,2\right\}^{\mathbb{N}}\), $ \mathcal{A} = M_{2 \times 2} $, \(\mathfrak{L}(\mathcal{A}) = \mathfrak{L}(M_{2 \times 2}(\mathbb{R}))\), and consider potentials
  \(\varphi_x(g) = (\widehat{\operatorname{tr}}\,g)P(x)\), where \(\widehat{\operatorname{tr}} = \frac{1}{2} \operatorname{Tr}\).
  Normalization in this situation means \(P(1y)+P(2y)=\operatorname{Id}\) for all \(y \in \left\{1,2\right\}^{\mathbb{N}}\).
  Note that \(\widehat{\operatorname{tr}}\,(\operatorname{Id}) = 1\), so \(J_\varphi(x) = \varphi_x(\operatorname{Id})=P(x)\).

  As explained in Example \ref{ex:tr-type}, the associated eigenstate for the transfer operator is \(\eta\) with
  \(\eta(g)=\int \widehat{\operatorname{tr}}\,(g(x)) \operatorname{d} \! \mu(x)\), where \(\mu\) denotes the equilibrium measure for the scalar 
  potential \(x \mapsto \widehat{\operatorname{tr}}\,P(x)\). In the next two examples, \(\widehat{\operatorname{tr}}\,P(\cdot) \equiv
  \frac{1}{2}\), so \(\mu\) is the Bernoulli \((\frac{1}{2},\frac{1}{2})\) measure.

  \begin{enumerate}[(i)]
    \item Constant case.

  Take \(P(x) \equiv \begin{bmatrix} \frac{1}{2} & 0 \\ 0 & \frac{1}{2} \end{bmatrix}\).
  Then \(\log J_\varphi=\log P=\begin{bmatrix} \log \frac{1}{2} & 0 \\ 0 & \log \frac{1}{2} \end{bmatrix}\), hence:
  \[
  h(\varphi,\eta,\sigma)
  = -\eta(\log J_\varphi)
  = -\int \widehat{\operatorname{tr}}\,(\log P) \operatorname{d} \! \mu
  = -\log\frac{1}{2}
  =\log 2.
  \]

  \item First-coordinate dependent case.

  Let \(p\in(0,1)\). Take:
  \[
  P(1y) = \begin{bmatrix} p & 0 \\ 0 & 1-p \end{bmatrix} \text,\qquad
  P(2y) = \begin{bmatrix} 1-p & 0 \\ 0 & p \end{bmatrix} \text,
  \]
  so that \(P(1y)+P(2y)=\operatorname{Id}\).
  Then, for either the first symbol:
  \[
  \widehat{\operatorname{tr}}\,(\log P(i y)) = \frac{1}{2} \big(\log p+\log(1-p)\big) \text,
  \]
  and therefore:
  \[
  h(\varphi,\eta,\sigma)
  = -\int (\widehat{\operatorname{tr}}\,\log P(i y)) \operatorname{d} \! \mu
  \]
  \[
  = -\frac{1}{2} \big(\log p+\log(1-p)\big)
  = -\frac{1}{2} \log\big(p(1-p)\big) \text.
  \]
  This quantity is minimized at \(p=\frac{1}{2}\), where \(h(\varphi,\eta,\sigma) = \log 2\), and:
  \[
  h(\varphi,\eta,\sigma) \longrightarrow +\infty
  \text{ as } p \to 0 \text{ or } p \to 1 \text.
  \]
  \end{enumerate}

  In particular, both potentials yield the same eigenstate \(\eta\) but produce different entropies \(h(\varphi,\eta,\sigma)\), 
  illustrating that our entropy is a function of the pair \((\eta,\varphi)\), not of \(\eta\) alone. Admittedly, \(\eta\)
  depends on \(\varphi\), and in that sense, this quantity we are labeling as entropy, would be a function of the
  potential.
\end{example}

Now, in order to  justify more properly our ansatz, consider the case of Example \ref{ex:tr-type}. \(\Omega_{A}\) is an aperiodic subshift of
finite type, \(\mathcal{A} = M_{d \times d}(\mathbb{R})\), and the potential \(\varphi : \Omega_{A} \to \mathfrak{L}(\mathcal{A})\)
is characterized by 
\begin{equation}\label{prec3}\varphi_{x}(g(x)) = \left( \widehat{\operatorname{tr}}\, g(x) \right)P(x),
\end{equation} where \(P(x)\) is
a (Lipshitz  continuous) family of strictly positive matrices such that:
\begin{equation}\label{prec}
\sum_{\substack{i \in \Sigma \\ A(i,x_{1}) = 1}} P(ix) = \operatorname{Id} \text,
\end{equation}
for all \(x \in \Omega_{A}\). 

\begin{definition} \label{def:tr-type}
	In case \(\varphi\) is defined via the previous paragraph (see \eqref{prec3} and \eqref{prec}), we say it is of \emph{trace-type} with \emph{matrix-factor} \(P\).
\end{definition}

We have seen that when \(\varphi\) is of trace-type, the eigenstate \(\eta\) for the \(\mathcal{A}\)-valued transfer operator \(L_{\varphi}\)
is given by integrating the normalized trace against the eigenprobability  $
\mu_{\widehat{\operatorname{tr}}\, P} $,  for the classical transfer
operator \(L_{\widehat{\operatorname{tr}}\, P}\).

Note that $\eta$ is invariant for the action of the shift $\sigma$ in the sense that for any $g$
we get
\begin{equation}\label{prec1}\eta(g \circ \sigma)= \int  \widehat{\operatorname{tr}} (g \circ \sigma) d \mu_{\widehat{\operatorname{tr}}\, P} = \int  \widehat{\operatorname{tr}} (g) d \mu_{\widehat{\operatorname{tr}}\, P}= \eta(g).
\end{equation}

Moreover, we claim that  the entropy of such an eigenstate \(\eta\)
is  related to the one for  \(\mu\). In fact, by employing our definition of entropy, we have the following inequality:
\begin{equation*}
h(\varphi,\eta,\sigma) := - \eta(\log J) = - \int \widehat{\operatorname{tr}}\, \left( \log \varphi_{x}(\mathbb{I}_{\mathcal{A}}) \right) \operatorname{d} \!
\mu_{\widehat{\operatorname{tr}}\, P}(x) =
\end{equation*}
\begin{equation*}
= - \int \widehat{\operatorname{tr}}\, \left( \log P(x) \right) \operatorname{d} \!
\mu_{\widehat{\operatorname{tr}}\, P}(x)
\geq - \int \log \left( \widehat{\operatorname{tr}}\, P(x) \right) \operatorname{d} \!
\mu_{\widehat{\operatorname{tr}}\, P}(x)
= h_{\mu_{\widehat{\operatorname{tr}}\, P}}(\sigma) \text,
\end{equation*}
where \(h_{\mu_{\widehat{\operatorname{tr}}\, P}}(\sigma)\) denotes the Kolmogorov-Sinai entropy of \(\mu_{\widehat{\operatorname{tr}}\, P}\)
with respect to \(\sigma\). If we let \(d = 1\), we are back to the classical setting. Then the trace becomes trivial, 
and the inequality becomes an equality, as expected.
\medskip

We believe this is a satisfactory justification for the definition of entropy introduced above.

\medskip
Our definition of entropy also satisfies an inequality that is somehow  associated with the variational principle in the case the pressure (as defined in \cite{papo}) is zero.
More precisely, if $J: \Omega \to (0,1)$ is normalized and Holder
we get that
$$\sup_{\tilde{\mu} \, \text{ invariant for}\, \sigma} \{\,h(\tilde{\mu}) + \int \log J\, d \tilde{\mu} \} =0.$$ 
The maximal value $0$ is attained when $\tilde{\mu}= \mu_{J}$, where $\mathcal{L}_J^* ( \mu_{J} ) =\mu_{J} $.

\medskip

Here we can show the following:
\begin{theorem} \label{prep}
	
  Let \(\varphi\) be a normalized trace-type potential, with eigenstate \(\eta\) and jacobian \(J\). Then,  
  \begin{equation} \label{ineq:norm-var}  h(\psi,\tilde{\eta},\sigma) + \tilde{\eta}(\log J) \leq 0 \text,
	\end{equation}
	where \(\tilde{\eta}\) is the eigenstate for a normalized trace-type potential \(\psi\). 
  The value $0$ in \eqref{ineq:norm-var} is obtained when \(\tilde{\eta} = \eta\) and $\psi = \varphi$.
\end{theorem}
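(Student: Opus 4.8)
The plan is to collapse the matrix-valued inequality \eqref{ineq:norm-var} into a single scalar integral against a classical equilibrium measure, and then to attack it with the classical variational principle of \cite{papo} combined with the spectral concavity estimate already used in the justification preceding the theorem.

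First I would unwind the definitions. Write $P$ for the matrix-factor of $\varphi$ and $Q$ for that of $\psi$, so that the respective Jacobians are $J = \varphi_{x}(\mathbb{I}_{\mathcal{A}}) = P(x)$ and $J_{\psi} = \psi_{x}(\mathbb{I}_{\mathcal{A}}) = Q(x)$, using $\widehat{\operatorname{tr}}(\mathbb{I}_{\mathcal{A}}) = 1$. By Example \ref{ex:tr-type} the eigenstate of $\psi$ is $\tilde{\eta}(g) = \int \widehat{\operatorname{tr}}\, g(x)\, \operatorname{d}\! \nu(x)$ with $\nu := \mu_{\widehat{\operatorname{tr}}\,Q}$. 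Substituting into Definition \ref{efen} gives
\begin{equation*}
h(\psi,\tilde{\eta},\sigma) + \tilde{\eta}(\log J) = -\!\int \widehat{\operatorname{tr}}\,(\log Q)\, \operatorname{d}\! \nu + \int \widehat{\operatorname{tr}}\,(\log P)\, \operatorname{d}\! \nu = \int \widehat{\operatorname{tr}}\,\left(\log P - \log Q\right)\, \operatorname{d}\! \nu \text,
\end{equation*}
so the theorem reduces to the scalar inequality $\int \widehat{\operatorname{tr}}\,(\log P)\, \operatorname{d}\! \nu \leq \int \widehat{\operatorname{tr}}\,(\log Q)\, \operatorname{d}\! \nu$. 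The equality statement is then immediate: when $\psi = \varphi$ we have $P = Q$, the integrand vanishes identically, and the left-hand side is exactly $0$.

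The two ingredients I would deploy are: (i) the classical variational principle applied to the normalized Lipschitz scalar potential $\widehat{\operatorname{tr}}\,P$. Since $\nu$ is the equilibrium state of the normalized potential $\widehat{\operatorname{tr}}\,Q$, we have $h_{\nu}(\sigma) = -\int \log(\widehat{\operatorname{tr}}\,Q)\, \operatorname{d}\! \nu$, while testing the potential $\widehat{\operatorname{tr}}\,P$ against the invariant measure $\nu$ gives $h_{\nu}(\sigma) + \int \log(\widehat{\operatorname{tr}}\,P)\, \operatorname{d}\! \nu \leq 0$, whence $\int \log(\widehat{\operatorname{tr}}\,P)\, \operatorname{d}\! \nu \leq \int \log(\widehat{\operatorname{tr}}\,Q)\, \operatorname{d}\! \nu$; and (ii) the spectral Jensen inequality $\widehat{\operatorname{tr}}\,(\log M) \leq \log(\widehat{\operatorname{tr}}\,M)$, valid for every strictly positive matrix $M$ by concavity of $\log$ applied to the spectrum. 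Applied to $M = P(x)$ and integrated, (ii) yields $\int \widehat{\operatorname{tr}}\,(\log P)\, \operatorname{d}\! \nu \leq \int \log(\widehat{\operatorname{tr}}\,P)\, \operatorname{d}\! \nu$, which chains with (i) to bound the $P$-term from above by $\int \log(\widehat{\operatorname{tr}}\,Q)\, \operatorname{d}\! \nu$.

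The main obstacle I anticipate is the mismatch between the inner-trace quantity $\widehat{\operatorname{tr}}\,(\log Q)$ appearing in the entropy and the outer-trace quantity $\log(\widehat{\operatorname{tr}}\,Q)$ that the classical variational principle actually controls. Chaining (i) and (ii) bounds $\int \widehat{\operatorname{tr}}\,(\log P)\, \operatorname{d}\! \nu$ above by $\int \log(\widehat{\operatorname{tr}}\,Q)\, \operatorname{d}\! \nu$, but to close the argument I must replace this by $\int \widehat{\operatorname{tr}}\,(\log Q)\, \operatorname{d}\! \nu$, and here the Jensen inequality points the wrong way, since $\widehat{\operatorname{tr}}\,(\log Q) \leq \log(\widehat{\operatorname{tr}}\,Q)$. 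The crux is therefore to control the nonnegative Jensen gap $\log(\widehat{\operatorname{tr}}\,Q) - \widehat{\operatorname{tr}}\,(\log Q)$, i.e. the deviation of $Q(x)$ from a scalar multiple of the identity; I expect this is precisely where the delicate work lies, and where additional structure on the matrix-factors (for instance that they be scalar multiples of a common fixed projection-valued family, or otherwise share a spectral structure along which the two Jensen gaps align) must be invoked. I would note that if the relevant entropy is instead read as the Kolmogorov--Sinai entropy $h_{\nu}(\sigma)$ of the underlying scalar system, steps (i)--(ii) close directly and the inequality follows with no further hypothesis, which localizes the genuine difficulty entirely in this inner-versus-outer trace reconciliation.
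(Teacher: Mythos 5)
Your reduction and both of your ingredients coincide with the paper's own proof. The paper likewise rewrites \eqref{ineq:norm-var} as \(-\tilde{\eta}(\log \tilde{J}) + \tilde{\eta}(\log J) \leq 0\) and attacks this with exactly the pair you name: the scalar Gibbs inequality (Lemma \ref{lem:qqqp}, applied pointwise to the probability vectors \(\left(\widehat{\operatorname{tr}}\, Q(ix)\right)_{i}\) and \(\left(\widehat{\operatorname{tr}}\, P(ix)\right)_{i}\), then integrated using the \(L_{\psi}^{\prime}\)-invariance of \(\tilde{\eta}\) --- the same content as your integrated variational principle (i)) together with Jensen's inequality \(\widehat{\operatorname{tr}}\,\log M \leq \log \widehat{\operatorname{tr}}\, M\) for strictly positive \(M\). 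The sole divergence is the step you decline to take. In the paper's displayed chain, the passage from the inner-trace line \(-\sum_{i} (\widehat{\operatorname{tr}}\, Q(ix))\, \widehat{\operatorname{tr}}\,\log Q(ix) + \sum_{i} (\widehat{\operatorname{tr}}\, Q(ix))\, \widehat{\operatorname{tr}}\,\log P(ix)\) to the outer-trace line \(-\sum_{i} (\widehat{\operatorname{tr}}\, Q(ix)) \log \widehat{\operatorname{tr}}\, Q(ix) + \sum_{i} (\widehat{\operatorname{tr}}\, Q(ix)) \log \widehat{\operatorname{tr}}\, P(ix)\) is attributed to Jensen; this is valid for the \(P\)-terms, which enter with a plus sign, but for the \(Q\)-terms, which enter with a minus sign, it would need the \emph{reverse} inequality \(\widehat{\operatorname{tr}}\,\log Q(ix) \geq \log \widehat{\operatorname{tr}}\, Q(ix)\), which fails unless every \(Q(ix)\) has all eigenvalues equal. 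So the obstacle you flag is not an artifact of your route: it is exactly where the paper's own argument breaks, and no additional idea in the paper repairs it.

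Worse, without extra hypotheses the statement itself is false, and the paper's own entropy example in Section \ref{Pre} supplies the counterexample. On the full \(2\)-shift with \(d = 2\), take \(\varphi\) of trace-type with constant matrix-factor \(P \equiv \frac{1}{2}\operatorname{Id}\) (case (i)) and \(\psi\) of trace-type with \(Q(1y) = \operatorname{diag}(p, 1-p)\), \(Q(2y) = \operatorname{diag}(1-p, p)\) (case (ii)). Both are normalized, both have \(\widehat{\operatorname{tr}}\, P \equiv \widehat{\operatorname{tr}}\, Q \equiv \frac{1}{2}\), hence the same eigenstate \(\tilde{\eta} = \eta = \int \widehat{\operatorname{tr}}\,(\cdot)\operatorname{d}\!\mu\) with \(\mu\) the Bernoulli \(\left(\frac{1}{2},\frac{1}{2}\right)\) measure, and \(J = \frac{1}{2}\operatorname{Id}\); yet
\begin{equation*}
h(\psi,\tilde{\eta},\sigma) + \tilde{\eta}(\log J) = -\tfrac{1}{2}\log\big(p(1-p)\big) - \log 2 > 0 \quad \text{for every } p \neq \tfrac{1}{2} \text.
\end{equation*}
What your steps (i) and (ii) do validly establish is the weaker inequality \(h_{\nu}(\sigma) + \tilde{\eta}(\log J) \leq 0\), where \(h_{\nu}(\sigma) = -\int \log \widehat{\operatorname{tr}}\, Q \operatorname{d}\!\nu\) is the Kolmogorov--Sinai entropy of \(\nu = \mu_{\widehat{\operatorname{tr}}\, Q}\) --- precisely the ``outer trace'' reading you propose at the end --- and the theorem as stated becomes correct under structural assumptions that annihilate the Jensen gap for \(Q\), for instance matrix-factors that are scalar multiples of the identity. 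Your localization of the genuine difficulty in the inner-versus-outer trace reconciliation is therefore not a shortcoming of your attempt relative to the paper; it is an accurate diagnosis of a gap in the paper's proof and of the failure of the statement in the generality claimed.
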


\begin{proof}
	By definition, \(h(\varphi,\eta,\sigma) + \eta(\log J) = 0\). Furthermore, if \(\tilde{J}\) denotes the Jacobian for \(\psi\), we have, 
	also by definition, \(h(\psi,\tilde{\eta},\sigma) + \tilde{\eta}(\log \tilde{J}) = 0\).
	Therefore, \eqref{ineq:norm-var} may be rewritten as \(-\tilde{\eta}(\log \tilde{J}) + \tilde{\eta}(\log J) \leq 0 \). 
	To prove this inequality holds, we use the following standard lemma \cite[Lemma 3.3]{papo}:
	
	\begin{lemma} \label{lem:qqqp}
		Let \(q_{i}\), \(p_{i}\), \(1 \leq i \leq n\) be two families of strictly positive real numbers such that \(\sum
		q_{i} = \sum p_{i} = 1\). Then:
		\begin{equation*}
		- \sum_{i = 1}^{n} q_{i} \log \left( q_{i} \right) + \sum_{i = 1}^{n} q_{i} \log \left( p_{i} \right) \leq 0 \text.
		\end{equation*}
	\end{lemma}
	
	Let \(Q\) be the matrix-factor of \(\psi\) (recall Definition \ref{def:tr-type}). Employing Jensen's inequality to 
	the normalized trace, plus Lemma \ref{lem:qqqp}, it follows that, for all \(x \in \Omega_{A}\):
	\begin{equation*}
	- \sum_{\substack{i \in \Sigma \\ A(i,x_{1}) = 1}} \widehat{\operatorname{tr}}\, \left[ \psi_{ix} \log \left( \psi_{ix}(\mathbb{I}_{\mathcal{A}}) \right) \right] + \sum_{\substack{i \in \Sigma \\ A(i,x_{1}) = 1}} \widehat{\operatorname{tr}}\, \left[ \psi_{ix} \log \left( \varphi_{ix}(\mathbb{I}_{\mathcal{A}}) \right) \right] =
	\end{equation*}
	\begin{equation*}
	- \sum_{\substack{i \in \Sigma \\ A(i,x_{1}) = 1}} \widehat{\operatorname{tr}}\, \left[ Q(ix) \widehat{\operatorname{tr}}\, \log \left( Q(ix) \right) \right] + \sum_{\substack{i \in \Sigma \\ A(i,x_{1}) = 1}} \widehat{\operatorname{tr}}\, \left[ Q(ix) \widehat{\operatorname{tr}}\, \log \left( P(ix) \right) \right] \leq
	\end{equation*}
	\begin{equation*}
	- \sum_{\substack{i \in \Sigma \\ A(i,x_{1}) = 1}} \left[ \widehat{\operatorname{tr}}\, Q(ix) \right] \log \left( \widehat{\operatorname{tr}}\, Q(ix) \right) + \sum_{\substack{i \in \Sigma \\ A(i,x_{1}) = 1}} \left[ \widehat{\operatorname{tr}}\, Q(ix) \right] \log \left( \widehat{\operatorname{tr}}\, P(ix) \right) \leq 0
	\end{equation*}
	or, equivalently:
	\begin{equation*}
	- \widehat{\operatorname{tr}}\, (L_{\psi}\log \tilde{J})(x) + \widehat{\operatorname{tr}}\, (L_{\psi} \log J)(x) \leq 0 \text.
	\end{equation*}
	
	Because the eigenmeasure for \(\widehat{\operatorname{tr}}\, Q\) is positive:
	\begin{equation*}
	- \tilde{\eta}(L_{\psi}\log \tilde{J}) + \tilde{\eta}(L_{\psi} \log J) \leq 0 \text,
	\end{equation*}
	and because \(\tilde{\eta}\) is the eigenstate for \(L_{\psi}\):
	\begin{equation*}
	- \tilde{\eta}(\log \tilde{J}) + \tilde{\eta}(\log J) \leq 0 \text.
	\end{equation*}
\end{proof}

The above results suggest that the concept $h(\varphi,\eta,\sigma)$ of Definition  may capture the same structural role as entropy in the scalar case, and it is 
consistent with the interpretation of \(J\) as a generalized Jacobian. We also remark that because both inequalities
hold for any trace-type potentials inducing the given states, they would also hold for their infima over such
potentials, which would in turn be a function of the states themselves. However, Example \ref{ex:max-ent} and the ones in Subsection \ref{ex:pauli} 
show that it is possible for a non-trace-type potential (\(p \neq 1\)) to share eigenstates with a potential of trace-type (\(p = 1\)).

\section{Relation to previous literature} \label{rela}

The results of the present work touch upon some topics also addressed in \cite{ca-la} and \cite{ye}, both of which consider
transfer operators associated with matrix-valued potentials under given dynamical frameworks, but they are not included there. Our main theorem is
much more akin to the main theorems of \cite{ye}. We will elaborate on that.

In \cite{ca-la} results are framed in the context of smooth dynamical systems on compact differentiable manifolds. There,
the base map is assumed to be a covering map, which ensures that each point has finitely many local inverses—analogous
to inverse branches in symbolic dynamics. In contrast, \cite{ye} considers an iterated function system (IFS) acting on
a compact attractor via contractive mappings, where the symbolic space is replaced by a compact metric space.

In \cite{ye} it is assumed that the potential is positive (assumption (H1)) and moreover that, for each point, the family of 
matrix-valued functions given by the potential along its preimages satisfy a primitivity condition (assumption (H2)). 
On the other hand, \cite{ca-la} assumes invertibility of the potential matrices and does not require primitivity.

Our work is closer to \cite{ye} (but different), in the sense  that our functions also take values in a finite-dimensional space, and our potentials
are required to be positivity-improving, which is stronger than the combination (H1) plus (H2). However, the positivity 
aspect of our hypothesis refers to the intrinsic cone of positive elements in the C*-algebra \(\mathcal{A}\), which is
fundamentally different from the coordinate-wise positive cone. For example, in \(M_{2 \times 2}(\mathbb{R})\) a matrix 
is positive if and only if it is Hermitian and has non-negative eigenvalues, a condition that is nonlinear in the matrix 
entries and richer than positivity in \(\mathbb{R}^{4}\) (which would just mean positivity of each coordinate viewed as an element of \(\mathbb{R}^{4}\)).

As such, the positivity-improving hypothesis in our setting implies distinct structural constraints that are not
captured by simply embedding the algebra into a vector space. Consequently, our results cover a genuinely new class
of examples. This distinction, although subtle, also has implications for the structure of eigenstates and the
behavior under iteration of the associated transfer operators. The examples we present here evidence such a distinction.

Besides this distinction, it is also worthwhile to emphasize that, in the case of a noncommutative C*-algebra
\(\mathcal{A}\), our eigenstate is not necessarily a (vector-valued) eigenmeasure, as in \cite{ye}. Rather, it is an
``\(\mathcal{A}\)-state-valued'' measure. Not only is the positivity condition different, but the normalization condition is
different as well (and accordingly).

We believe that a general formulation in the setting of order-unit spaces could clarify these distinctions, potentially
allowing one to isolate the minimal structure needed for our results. Within such a framework, C*-algebras would appear 
as natural examples where one still manages to handle richer cone geometries explicitly. We leave such generalizations for 
further work.

\section{Some more Examples} \label{some}

\subsection{The constant potential and the maximal entropy measure} \label{some1}

Assume \(\Gamma : \mathcal{A} \to \mathcal{A}\) is positivity-improving, $\mathcal{A}=  M_{n \times n}(\mathbb{R}) $, and satisfies \(\Gamma (\mathbb{I}) = \mathbb{I}\).
Moreover, assume that trace $\Gamma (a)= $ trace $a$, for any  matrix $a\in   M_{n \times n}(\mathbb{R}) $.  We denote $\xi(a)=$ trace $a$, in this way for any $a \in M_{n \times n}(\mathbb{R})$ we get
$$ \xi(a)=\xi ( \Gamma(a) ).$$

Set \(\Omega_{A} = \left\{1, 2, \cdots, k\right\}^{\mathbb{N}}\)
the full shift on \(k\) symbols. 

 In most of our examples, $\xi=$ trace will help to define the eigenstate; but, for instance, when $\mathcal{A}= \mathbb{C}^d$, this is not  exactly the case.

Now, we define the potential \(\Phi : \Omega_{A} \to \mathfrak{L}(\mathcal{A})\) 
in such way that for any $x$ we get that $\Phi(x) =\Gamma.$  We call such $\Phi$ of the constant potential. In this case
 \(\Phi(\mathbb{I}_{\mathcal{A}}) = \mathbb{I}_{\mathcal{A}}\). We denote by  $L_{ \frac{1}{k}\Phi}$ the associated Ruelle operator.
 
 Under our assumptions, for any $g:\Omega \to M_{n \times n}(\mathbb{R})$ we get that
 $$ \xi(g(x)) = \xi (\Phi (g(x)  )= \xi (\Gamma (g(x)).$$

We claim that the eigenstate \(\eta\) given by Theorem \ref{thm:ruelle-normal} acting on continuous
functions \(g \in \mathcal{C}(\Omega_{A}; \mathcal{A})\)  is given by the expression:
\begin{equation} \label{eq:const-pot}
\eta(g) = \int \xi(g(x)) \operatorname{d} \! \mu(x) \text,
\end{equation}
where \(\mu\) denotes the maximal entropy measure.

This is so because:
\begin{equation*} 
\eta(L_{\Phi} g) = \int \xi\left(\left( L_{\Phi} g\right)(y)\right) \operatorname{d} \! \mu(y)
= \int \xi\left(\sum_{i = 1}^{k} \Phi\left(g(iy)\right)\right) \operatorname{d} \! \mu(y)
\end{equation*}
\begin{equation*} 
= \int \sum_{i = 1}^{k} \xi\left(\Phi\left(g(iy)\right)\right) \operatorname{d} \! \mu(y)
= \int \frac{1}{k} \sum_{i = 1}^{k} \xi\left(g(iy)\right) \operatorname{d} \! \mu(y) 
\end{equation*}
\begin{equation} \label{ute}
= \int \left( L_{\frac{1}{k}} \left(\xi \circ g\right)\right)(y) \operatorname{d} \! \mu(y) 
= \int \xi\left(g(x)\right) \operatorname{d} \! \mu(x) 
= \eta(g) 
\end{equation}
where \(L_{\frac{1}{k}}\) denotes the \emph{classical} normalized transfer operator associated with the maximal
entropy measure.

\subsection{An example related to Pauli matrices} \label{ex:pauli}

\begin{example}
Let \( \Omega_{A} = \{1,2\}^\mathbb{N} \) be the full shift on two symbols and  $\mathcal{A}=  M_{2 \times 2}(\mathbb{R})$, and \(0 < p \leq 1\).  We want to define, for each value $p$ a potential $\varphi^p.$
Consider the constant potential \(\varphi^{p}: M_{2 \times 2}(\mathbb{R}) \to M_{2 \times 2}(\mathbb{R}) \) valued in the depolarizing
channel, that is:
\begin{equation*}
\varphi^{p}(g(x)) := \frac{1}{2} \left((1 - p) g(x) + p \left(\widehat{\operatorname{tr}}\, g(x)\right) \operatorname{Id} \right) 
\end{equation*}
\begin{equation}\label{depo}
= \frac{1}{2} \left((1 - p) g(x) + \frac{p}{3} \left(X g(x)
X + Y g(x) Y + Z g(x) Z\right)\right) \text,
\end{equation}
where \(X\), \(Y\), and \(Z\) are the Pauli matrices (see \cite{nich}):
$$
X=\left(
\begin{array}{cc}
0 & 1\\
1 & 0
\end{array}
\right),\,\,
Y=\left(
\begin{array}{cc}
0& -i\\
i & 0
\end{array}
\right),\,\,
\text{and}\,\,
Z=\left(
\begin{array}{cc}
1 & 0\\
0 & -1
\end{array}
\right).$$

The traces of all these matrices is equal to zero,  then the complex nature of the Pauli matrices will not interfere in our reasoning. The  depolarizing
channel present such feature (see \cite{nich}).

\eqref{depo} is the normalized qubit depolarizing channel, and it defines a completely positive, unital, and positivity-improving
map for every \(0 < p \leq 1\) \cite[Subsection 8.3.4]{nich}. The action of its associated transfer operator is:
\begin{equation} \label{gg5}
\left(L_{\varphi^{p}} g\right)(y) = \sum_{j=1}^{2} \varphi(g(j y)) 
\end{equation}
\begin{equation*}
= \left(1 - p\right) \left(\frac{1}{2} \sum_{j=1}^{2} g(j y)\right) + p \left(\frac{1}{2} \sum_{j=1}^{2} \left( \widehat{\operatorname{tr}}\, g( jy) \right) \operatorname{Id} \right) \text.
\end{equation*}

By iteration, one finds for each value $n$ the general expression:
\begin{equation*}
(L_{\varphi^{p}}^{n} g)(y) = (1 - p)^{n} \overline{g}^{(n)}(y)
+ \left(1 - (1 - p)^{n}\right) \overline{\widehat{\operatorname{tr}}\, g}^{(n)}(y) \operatorname{Id}
\text,
\end{equation*}
where:
\begin{align*}
\overline{g}^{(n)}(y) &:= \frac{1}{2^n} \sum_{i_1, \dots, i_n} g(i_1 \dots i_n y) \text{, and:} \\
\overline{\widehat{\operatorname{tr}}\, g}^{(n)}(y) &:= \frac{1}{2^n} \sum_{i_1, \dots, i_n} \widehat{\operatorname{tr}}\, g(i_1 \dots i_n y) \text.
\end{align*}

Notice how \(\overline{\widehat{\operatorname{tr}}\, g}^{(n)}\) coincides with the \(n^{\operatorname{th}}\)
iterate of the \emph{scalar} valued function \(\widehat{\operatorname{tr}}\, g\) with respect to the classical
transfer operator associated with the maximal entropy measure. Therefore, as \( n \to +\infty \), we have
\( (1 - p)^n \to 0 \) and \( \overline{\widehat{\operatorname{tr}}\, g}^{(n)}(y) \to \int \widehat{\operatorname{tr}}\, g(x) \operatorname{d} \! \mu(x) \).
Hence:
\begin{equation*}
\lim_{n \to +\infty} (L_{\varphi^{p}}^n g)(y) = \left( \int \widehat{\operatorname{tr}}\, g(x) \operatorname{d} \! \mu(x) \right) \operatorname{Id} \text, \quad \forall y \in \left\{1,2\right\}^{\mathbb{N}} \text.
\end{equation*}
Item 3. (b) in Theorem \ref{thm:ruelle-normal} then ensures this convergence is uniform and that the eigenstate \(\eta\) is given by:
\begin{equation} \label{gg6}
\eta(g) := \int \widehat{\operatorname{tr}}\, g(x) \operatorname{d} \! \mu(x).
\end{equation}
\end{example}

\subsection{Potentials taking values in positivity-improving \\ linear mappings of \(d \times d\) complex matrices} \label{ok2}

In this section, we consider the case when \(\mathcal{A} = M_{d \times d}(\mathbb{R})\); some states that are in principle non-commutative in nature will be related to classical equilibrium measures as in \cite{papo}.

Now we
are going to focus only on potentials taking values in positivity-improving linear mappings of the form:
\begin{equation*}
g(x) \mapsto \varphi_{x}(g(x)) := \left( \widehat{\operatorname{tr}}\, g(x) \right) \begin{bmatrix} f_{1}(x) & 0 & \cdots & 0 \\ 0 & f_{2}(x) & & \vdots \\ \vdots & & \ddots & \\ 0 & \cdots & & f_{d}(x) \end{bmatrix} \text{,}
\end{equation*}
where \(\widehat{\operatorname{tr}} := \frac{1}{d} \operatorname{tr} : M_{d \times d}(\mathbb{R}) \to \mathbb{R}\) 
denotes the normalized trace functional, and \(\mathcal{C}(\Omega_{A}; \mathbb{R}) \ni f_{1}, f_{2}, \cdots, f_{d} > 0\).

For one such potential, the action of the transfer operator on a given $g \in \mathcal{C}(\Omega_{A}; M_{d \times d}(\mathbb{R}))$ becomes: 
\begin{equation*}
\left( L_{\varphi} g \right)(y) = \sum_{\substack{i \in \Sigma \\ A(i,y_{1}) = 1}} \left( \widehat{\operatorname{tr}}\, g(iy) \right) \begin{bmatrix} f_{1}(iy) & 0 & \cdots & 0 \\ 0 & f_{2}(iy) & & \vdots \\ \vdots & & \ddots & \\ 0 & \cdots & & f_{d}(iy) \end{bmatrix} \text.
\end{equation*}

Next, we will present a series of specific examples that will demonstrate the novelties that follow from our theoretical results.

\medskip

\begin{example} \label{ex:max-ent}
	Let \(k =  2\), $\mathcal{A}=M_{2 \times 2}(\mathbb{R}) $, and  the shift of finite type is defined via \(A = \begin{bmatrix} 1 & 1 \\ 1 & 1 \end{bmatrix}\); that is the full shift. In this case,  \(\Sigma = \left\{1, 2\right\}\)
	and \(\Omega_{A} = \left\{1, 2\right\}^{\mathbb{N}}\) is the full shift on two symbols.
	Also, let \(f_{1} = f_{2} = \frac{1}{2}\). 
	Then, the positivity-improving linear self-mapping of \(M_{2 \times 2}(\mathbb{R})\) in question is (half) a very 
	well-known quantum channel: the (degenerate) depolarizing channel (where, in the language of \cite[Subsection 8.3.4]{nich}, \(p = 1\)). 
	More explicitly, \(\varphi_{x}\) does not depend on \(x\in \Omega_A\) and is such that:
	\begin{equation}\label{43}
	g(x) \mapsto \left( \widehat{\operatorname{tr}}\, g(x) \right) \begin{bmatrix} \frac{1}{2} & 0 \\ 0 & \frac{1}{2} \end{bmatrix} \text{.}
	\end{equation}
	We denote the corresponding potential by \(\frac{\operatorname{Id}}{2} \widehat{\operatorname{tr}}\). The action of its associated transfer operator is: for $y\in \Omega$
	\begin{equation*}
	\left( L_{\frac{\operatorname{Id}}{2} \widehat{\operatorname{tr}}} g \right)(y) = \frac{1}{2} \left( \widehat{\operatorname{tr}} \Big( g(1y) + g(2y) \Big) \right) \operatorname{Id} \text.
	\end{equation*}
	
	Notice that after the first iteration, we (essentially) recover the classical transfer operator with constant
	normalized potential. In some sense, it is as if such a transfer operator only ``sees'' the trace, which is a \emph{scalar}
	function of \(x\). However, there is some differences; in the classical case (\textit{i.e.}, when \(\mathcal{A} = \mathbb{R}\), or \(d = 1\)), the eigenstate 
	of the dual of the transfer operator is a probability measure, and acts on \emph{scalar} functions \(f \in \mathcal{C}(\Omega_{A}; \mathbb{R})\) 
	simply by integration, while, in the present example, our observables are matrix-valued functions \(g \in \mathcal{C}(\Omega_{A}; M_{2 \times 2}(\mathbb{R}))\), 
	and the eigenstate \(\eta\) is no longer a measure, but rather a linear functional 
	\(\eta : \mathcal{C}(\left\{1,2\right\}^{\mathbb{N}}; M_{2 \times 2}(\mathbb{R})) \to \mathbb{R}\) defined by:
	\begin{equation*}
	g \mapsto \eta(g) := \int \widehat{\operatorname{tr}}\, g(x) \operatorname{d} \! \mu(x) \text,
	\end{equation*}
	where \(\mu\) denotes the maximal entropy measure associated with the classical full shift.
	
	In other words, while the measure \(\mu\) still plays a role, it is no longer the eigenstate itself. Instead, the
	eigenstate is the composition of integration against \(\mu\) with evaluation via the (normalized) trace. This
	reflects the operator-valued nature of the observables and the fact that scalar probability measures are
	insufficient to describe equilibrium in the noncommutative setting.
\end{example}

\begin{example} \label{ex:first-coord}
	Again, let \(\Omega_{A}\) be the full shift on two symbols and also \(d = 2\), $\mathcal{A}=M_{2 \times 2}(\mathbb{R}) $. Consider the functions \(f_{1}\) and \(f_{2}\)
	depending only on the first coordinate, such that:
	\begin{equation*}
	\begin{aligned}
	f_{1}(1x) := \lambda_{1} \text{, } f_{1}(2x) := 1 - \lambda_{1} \text, \\
	f_{2}(1x) := \lambda_{2} \text{, } f_{1}(2x) := 1 - \lambda_{2} \text, \\
	\end{aligned} \quad \forall x \in \Omega_{A} \text{, and } 0 < \lambda_{1} \text, \lambda_{2} < 1 \text.
	\end{equation*}
	
	Then, the eigenstate \(\eta : \mathcal{C}(\left\{1,2\right\}^{\mathbb{N}}; M_{2 \times 2}(\mathbb{R})) \to \mathbb{R}\)
	is:
	\begin{equation*}
	T \mapsto \eta(T) := \int \widehat{\operatorname{tr}}\, g(x) \operatorname{d} \! \mu_{\operatorname{avg}}(x) \text,
	\end{equation*}
	where \(\mu_{\operatorname{avg}}\) is the eigenmeasure for the classical transfer operator associated with the
	\emph{scalar} potential \(\frac{f_{1} + f_{2}}{2}\). Notice that this case generalizes example \ref{ex:max-ent}.
\end{example}

\begin{example} \label{ex:diagonals}
	Let \(\Omega_{A} \subseteq \left\{1, 2, \cdots, k\right\}^{\mathbb{N}}\) be any aperiodic subshift of finite type, $\mathcal{A}=  M_{d \times d}(\mathbb{R})$,
	and \(\mathcal{C}(\Omega_{A}; \mathbb{R}) \ni f_{1}, f_{2}, \cdots, f_{d} > 0\) a \(d\)-tuple of normalized scalar potentials. 
	Notice that their average is also a normalized scalar potential.
	Then, the corresponding potential \(\varphi_{f_{1}, f_{2}, \cdots, f_{d}} : \Omega_{A} \to \mathcal{L}(M_{d \times d}(\mathbb{R}))\)
	is also normalized. We claim that  the eigenstate \(\eta : \mathcal{C}(\Omega_{A}; M_{d \times d}(\mathbb{R})) \to \mathbb{R}\)
	mentioned in Theorem \ref{thm:ruelle-normal} is:
	\begin{equation*}
	g \mapsto \eta(g) := \int \widehat{\operatorname{tr}}\, g(x) \operatorname{d} \! \mu_{\operatorname{avg}}(x) \text,
	\end{equation*}
	where \(\mu_{\operatorname{avg}}\) is the eigenmeasure for the classical transfer operator associated with the
	\emph{scalar} potential \(\frac{1}{d} \sum_{i =1}^{d} f_{i}\). Notice how this generalizes example \ref{ex:first-coord}.
\end{example}

\begin{proof}
	\begin{equation*}
	\eta(L_{\varphi_{f_{1}, f_{2}, \cdots, f_{d}}} g) = \int \widehat{\operatorname{tr}}\, \left[\left(L_{\varphi_{f_{1}, f_{2}, \cdots, f_{d}}} g\right)(y)\right] \operatorname{d} \! \mu_{\operatorname{avg}}(y) 
  \end{equation*}
	\begin{equation*}
	= \int \widehat{\operatorname{tr}}\, \left(\sum_{\substack{i \in \Sigma \\ A(i,y_{1}) = 1}} \left( \widehat{\operatorname{tr}}\, g(iy) \right) \begin{bmatrix} f_{1}(iy) & 0 & \cdots & 0 \\ 0 & f_{2}(iy) & & \vdots \\ \vdots & & \ddots & \\ 0 & \cdots & & f_{d}(iy) \end{bmatrix}\right) \operatorname{d} \! \mu_{\operatorname{avg}}(y) 
  \end{equation*}
  \begin{equation*}
	= \int \sum_{\substack{i \in \Sigma \\ A(i,y_{1}) = 1}} \left( \widehat{\operatorname{tr}}\, g(iy) \right) \left(\frac{1}{d} \sum_{j = 1}^{d} f_{j}(iy)\right) \operatorname{d} \! \mu_{\operatorname{avg}}(y) 
	= \int \Big(L_{\operatorname{avg}} \widehat{\operatorname{tr}}\, g\Big)(y) \operatorname{d} \! \mu_{\operatorname{avg}}(y) 
  \end{equation*}
  \begin{equation*}
	= \int \widehat{\operatorname{tr}}\, g(x) \operatorname{d} \! \mu_{\operatorname{avg}}(x) 
	= \eta(g) \text.
	\end{equation*}
\end{proof}

\begin{example} \label{spi1}
	Let $\mathcal{A}= M_{2 \times 2}(\mathbb{R})$ and $\Omega=\{-1,1\}^\mathbb{N}$ .
	We can consider that $-1$ is associated with the spin $-$ and $1$ with the spin $+$.

Consider the class of functions  $\varphi : \Omega=\{-1,1\}^\mathbb{N} \to \mathcal{L}(M_{2 \times 2}(\mathbb{R}))$ 
  such that there exist strictly positive matrices $A_{-1}, A_{1} \in M_{2 \times 2}(\mathbb{R})$, satisfying for any $y\in \Omega$,
  \begin{equation} \label{seaf} \varphi_{(1,y)} = \left( \widehat{\operatorname{tr}}\,( \cdot ) \right) A_{-1} \,\, \text{and}\,\, \varphi_{(2,y)} = \left( \widehat{\operatorname{tr}}\,( \cdot ) \right) A_{1} \text,
\end{equation}
$$A_{-1} + A_{1} =I \text,$$

  Defining:
  \begin{equation*}
    J_{\varphi}(-1,y) := \widehat{\operatorname{tr}}\, A_{-1} \text{ , and }
    J_{\varphi}(1,y) := \widehat{\operatorname{tr}}\, A_{1} \text,
  \end{equation*}
	it follows that for any $y\in \Omega$, $J_\varphi(-1,y)+ J_\varphi(1,y)=1$. Denote by $\mu_{J_\varphi}$ the (classical) 
  equilibrium probability associated with the normalized potential $J_\varphi$.
	
	We denote by $g$ a general continuous function $g: \Omega \to  M_{2 \times 2}(\mathbb{R}) $.
	
  The action of the transfer operator is:
	\begin{equation} \label{gg1} g \,\to L_\varphi(g)(y) =\frac{1}{2} (\,\widehat{\operatorname{tr}}\,  g(1,y)\,) \, A_1+ \frac{1}{2} (\,\widehat{\operatorname{tr}}\,  g(-1,y)\,) \, A_{-1}.
	\end{equation}
	
	Note that the potential $ \varphi $ is positivity-improving.
	
	We claim that $\eta_\varphi$ such that
	\begin{equation} \label{gg2}  \eta_\varphi(T)=\int\widehat{\operatorname{tr}} \, (g(x))\,\, d \mu_{J_\varphi} (x)
	\end{equation}
	is the eigenstate for such a Ruelle operator. Indeed, given $g$
	$$\eta_\varphi(L_\varphi(T)  )= \int\widehat{\operatorname{tr}} \,(L_\varphi (g(x))\,\, d \mu_{J_\varphi} (x)= $$
	$$\int \widehat{\operatorname{tr}} \,[\,(\,\widehat{\operatorname{tr}}\, \frac{1}{2} g(1,x)\,) \, A_1+  (\,\widehat{\operatorname{tr}}\, \frac{1}{2} g(-1,x)\,) \, A_{-1}\,]\, d \mu_{J_\varphi}(x)=  $$
	$$\int  \,\,\widehat{\operatorname{tr}}\,  g(1,x)\, \,\widehat{\operatorname{tr}}\,(\frac{1}{2} A_1)+  \,\widehat{\operatorname{tr}}  g(-1,x)\, \, \widehat{\operatorname{tr}}\,(\frac{1}{2} A_{-1})\,\, d \mu_{J_\varphi}(x)=  $$
	$$\int  \,(\,\widehat{\operatorname{tr}}\,  g(1,x)\,) \,J(1,x))+  (\,\widehat{\operatorname{tr}}\,  g(-1,x)\,) \, J(-1,x))\,\, d \mu_{J_\varphi}(x)=  $$
	\begin{equation} \label{itre}\int \widehat{\operatorname{tr}} \, (g(x))\, d \mu_{J_\varphi} (x)=\eta_\varphi (g).
	\end{equation}
	
	\medskip
	
	We may call each  $\eta_\varphi$ obtained from a $\varphi$ as in \eqref{seaf}  of Gibbs  eigenstate (see Section \ref{Pre}).
	
	In consonance with Section \ref{Pre},  for each $\varphi$  the corresponding entropy of $\eta_\varphi$
	is:
  $$h(\eta, \varphi)= \eta_\varphi (\log \varphi)= \int \,\,\widehat{\operatorname{tr}} \, (\log \varphi_{x}(\operatorname{Id})) \, d \mu_{J_\varphi} (x).$$

\end{example}

\medskip

\begin{example} We will define a potential $\varphi$  acting on the family of continuous functions $g:\Omega \to M_{2 \times 2}(\mathbb{R})=\mathcal{A}$,  where $\Omega=\Omega_A=\{1,2\}^\mathbb{N}$ is the full shift space. Consider a Lipschitz continuous map $x \in \Omega_{A} \to P(x)$, where $P(x)$ is double stochastic with positive entries, and the
	corresponding matrices

	\begin{equation*}
	P_1(x) = \begin{bmatrix}
	\sqrt{p_{11}(x)} & 0 \\
	0 & \sqrt{p_{21}(x)}
	\end{bmatrix} \text, \quad
	P_2 (x)= \begin{bmatrix}
	\sqrt{p_{12}(x)} & 0 \\
	0 & \sqrt{p_{22}(x)}
	\end{bmatrix} \text,
	\end{equation*}
	and
	\begin{equation*}
	K (x)= 
	\begin{bmatrix}
	\sqrt{\pi_1(x)} & 0 \\
	0 & 0
	\end{bmatrix} \otimes P_1(x)
	+
	\begin{bmatrix}
	0 & 0 \\
	0 &\sqrt{\pi_2(x)}
	\end{bmatrix} \otimes P_2(x), 
	\end{equation*}
	where $(\pi_1(x,),\pi_2(x))=(\frac{1}{2},\frac{1}{2})$ is the stationary vector for the double stochastic matrix $P(x)$.
	
	\medskip

	Consider the transformation $W $ such that
	$$W(\begin{bmatrix}
	a_{11} & a_{12} \\
	a_{21} & a_{22}
	\end{bmatrix})  =\begin{bmatrix}
	a_{11}+ a_{22} & 0 \\
	0 & a_{11} +a_{22}
	\end{bmatrix} ,$$

	For each $x$, consider potentials of the form 
	$$x \mapsto \varepsilon_x(W(a(x)) \otimes \operatorname{Id})=  \operatorname{Tr}_1 \left( K(x) (W(a(x) )\otimes \operatorname{Id}) K (x)\right) .$$
	

	We want to determine the eigenstate for  $x \to \varepsilon_x(W(a(x)) \otimes \operatorname{Id})$
	
	\medskip
	
	Denote by 
	$$a(x)=\begin{bmatrix}
	a_{11}(x) & a_{12}(x) \\
	a_{21}(x) & a_{22}(x)
	\end{bmatrix}$$
	a general continuous map on $x\in \Omega$. For simplification, we will sometimes omit the dependence on $x$.

	For the fixed family $P(x)$
	consider the variable function  $a$. In this case
	\begin{equation*}
	\varepsilon_x(W(a(x)) \otimes \operatorname{Id}) := \operatorname{Tr}_1 \left( K^{*}\, (W(a(x)) \otimes \operatorname{Id}) \,K \right) 
  \end{equation*}
  \begin{equation*}
	= \operatorname{Tr}_1 \left(
    \left( \begin{array}{l}
	\begin{bmatrix}
	\sqrt{\pi_1} & 0 \\
	0 & 0
	\end{bmatrix} \otimes P_1 + \\ +
	\begin{bmatrix}
	0 & 0 \\
	0 & \sqrt{\pi_2}
	\end{bmatrix} \otimes P_2
  \end{array} \right)
	(W(a) \otimes \operatorname{Id})
  \left( \begin{array}{l}
	\begin{bmatrix}
	\sqrt{p_1} & 0 \\
	0 & 0
    \end{bmatrix} \otimes P_1 + \\ +
	\begin{bmatrix}
	0 & 0 \\
	0 & \sqrt{\pi_2}
	\end{bmatrix} \otimes P_2
  \end{array} \right)
	\right) 
  \end{equation*}
  \begin{equation*}
	= \operatorname{Tr}_1 \left(
	\begin{bmatrix}
	\sqrt{\pi_1} & 0 \\
	0 & 0
	\end{bmatrix} W(a)
	\begin{bmatrix}
	\sqrt{\pi_1} & 0 \\
	0 & 0
	\end{bmatrix} \otimes P_1^2
	+
	\begin{bmatrix}
	0 & 0 \\
	0 & \sqrt{\pi_2}
	\end{bmatrix}W( a)
	\begin{bmatrix}
	0 & 0 \\
	0 & \sqrt{\pi_2}
	\end{bmatrix} \otimes P_2^2
	\right) 
  \end{equation*}
  \begin{equation*}
	= \operatorname{Tr}_1 \left(
	\begin{bmatrix}
	(a_{11}+ a_{22} )\,\pi_1& 0 \\
	0 & 0
	\end{bmatrix} \otimes P_1^2
	+
	\begin{bmatrix}
	0 & 0 \\
	0 & (a_{11} + a_{22})\, \pi_2
	\end{bmatrix} \otimes P_2^2
	\right) 
  \end{equation*}
  \begin{equation*}
	= (a_{11} + a_{22}) \,\pi_1\, P_1^2 + (a_{11} +a_{22}) \pi_2 \, P_2^2=(a_{11} (x)+ a_{22}(x))\, \begin{bmatrix}
	\frac{1}{2} &0\\
	0 & \frac{1}{2}
	\end{bmatrix} \text.
	\end{equation*}
	
	In the similar way as in Example \ref{cf} (see also \eqref{43} in Example \ref{ex:max-ent}) we set for any $g$
	$$L_\varphi (g)(x)=$$
	  \begin{equation} \label{gg10}(g_{11}(1 x) \begin{bmatrix}
	\frac{1}{2} &0\\
	0 & \frac{1}{2}
	\end{bmatrix} + g_{22}(1x) \begin{bmatrix}
	\frac{1}{2} &0\\
	0 & \frac{1}{2}
	\end{bmatrix} )+   (g_{11}(2 x) \begin{bmatrix}
	\frac{1}{2} &0\\
	0 & \frac{1}{2}
	\end{bmatrix} + g_{22}(2x) \begin{bmatrix}
	\frac{1}{2} &0\\
	0 & \frac{1}{2}
	\end{bmatrix})
	\end{equation}
	  
	 The eigenstate is 
	 $$ g \to \frac{1}{2} \int (a_{11} (x)+ a_{22}(x))\, d \mu (x)= \frac{1}{2} \int \left( \widehat{\operatorname{tr}} \Big( g(x) \Big) \right) d \mu(x).$$
    Notice that this eigenstate is the same as the one coming from the potential of Example \ref{ex:max-ent}.
	
\end{example}

\subsection{Potentials taking values in  positive matrices}
\label{ok3}

In this section we consider the case when \(\mathcal{A} = \mathbb{R}^{N}\). This means we are concerned with \emph{vector}-valued 
functions \(g \in \mathcal{C}(\Omega_{A}; \mathbb{R}^{N})\). The multiplicative element in \(\mathcal{A} = \mathbb{R}^{N}\) is the vector $(1,1,...,1)$. The positivity-improving linear self-mappings of
\(\mathbb{R}^{N}\) are precisely the ones corresponding to \(N \times N\) square-matrices with strictly positive
entries. Therefore, the potentials we will be interested in are functions of the form \(\varphi : \Omega_{A} \to M_{N \times N}(\mathbb{R})\)
such that:
\begin{equation*}
\varphi_{x} := \begin{bmatrix} \varphi_{x_{11}} & \varphi_{x_{12}} & \cdots & \varphi_{x_{1N}} \\
\varphi_{x_{21}} & \varphi_{x_{22}} & \cdots & \varphi_{x_{2N}} \\
\vdots & \vdots & \ddots & \vdots \\
\varphi_{x_{N1}} & \varphi_{x_{N2}} & \cdots & \varphi_{x_{NN}} \\
\end{bmatrix} \text{ , where: } 
\begin{aligned} 
\varphi_{x_{\ell m}} > 0 \\
\forall \, 1 \leq \ell, m \leq N \text.
\end{aligned}
\end{equation*}

For one such potential, the action of the transfer operator becomes:
\begin{equation*}
\left( L_{\varphi} g \right)(y) = \sum_{\substack{i \in \Sigma \\ A(i,y_{1}) = 1}} \begin{bmatrix} \varphi_{iy_{11}} & \varphi_{iy_{12}} & \cdots & \varphi_{iy_{1N}} \\
\varphi_{iy_{21}} & \varphi_{iy_{22}} & \cdots & \varphi_{iy_{2N}} \\
\vdots & \vdots & \ddots & \vdots \\
\varphi_{iy_{N1}} & \varphi_{iy_{N2}} & \cdots & \varphi_{iy_{NN}} \\
\end{bmatrix} \begin{bmatrix} g_{1}(iy) \\ g_{2}(iy) \\ \vdots \\ g_{N}(iy) \end{bmatrix} 
\end{equation*}
\begin{equation*}
=\sum_{\substack{i \in \Sigma \\ A(i,y_{1}) = 1}} \sum_{\ell = 1}^{N}
\begin{bmatrix} \varphi_{iy_{1\ell}} g_{\ell}(iy) \\ \varphi_{iy_{2\ell}} g_{\ell}(iy) \\ \vdots \\ \varphi_{iy_{N\ell}} g_{\ell}(iy) \end{bmatrix} \text.
\end{equation*}

\begin{example}
	Let \(k = 2\), and \(A = \begin{bmatrix} 1 & 1 \\ 1 & 1 \end{bmatrix}\). That is, \(\Sigma
	= \left\{1, 2\right\}\)
	and \(\Omega_{A} = \left\{1, 2\right\}^{\mathbb{N}}\) is the full shift on two symbols. Also, let \(\varphi_{x_{\ell m}} = \frac{1}{2N}\),
	that is, \(\varphi_{x} = \frac{1}{2N} J\), where \(J\) is the $N$ by $N$  matrix with all entries equal to $1$; therefore does not depend on \(x\).
	Then, the action of the associated normalized transfer operator becomes: given $g=(g_1,g_2,...,g_N)$
	\begin{equation*}
	\left( L_{\frac{1}{2N} J} g\right)(y) = \frac{1}{2N} \sum_{i, \ell = 1}^{2, N} \begin{bmatrix} g_{\ell}(iy) \\ g_{\ell}(iy) \\ \vdots \\ g_{\ell}(iy) \end{bmatrix} \text,
	\end{equation*}
	and the corresponding eigenstate \(\eta : \mathcal{C}(\Omega_{A}; \mathbb{R}^{N}) \to \mathbb{R}\) is given by:
	\begin{equation*}
	\eta(g) = \frac{1}{N} \sum_{\ell = 1}^{N} \int g_{\ell}(x) \operatorname{d} \! \mu(x) \text,
	\end{equation*}
	where \(\mu\) denotes the maximal entropy measure.
\end{example}

\begin{example}
	Again, let \(\Omega_{A} = \left\{1, 2\right\}^{\mathbb{N}}\) be the full shift on two symbols. This time, let
	\(\varphi_{x} = \frac{1}{2} \left(\frac{p}{N} J + (1 - p) \operatorname{Id} \right)\), not depending on \(x\).
	The \(n^{\operatorname{th}}\) power of \(\left(\frac{p}{N} J + (1 - p) \operatorname{Id} \right)\) is equal to 
	\(\left(\frac{1 - (1 - p)^{n}}{N} J + (1 - p)^{n} \operatorname{Id} \right)\). Therefore, the
	\(n^{\operatorname{th}}\) iterate of the corresponding transfer operator is:
	\begin{equation*}
	\left( L_{\varphi}^{n} g \right)(y) = \frac{1}{2^{n}} \sum_{i_{1}, i_{2}, \cdots, i_{n} = 1}^{2} \left(\frac{1 - (1 - p)^{n}}{N} J + (1 - p)^{n} \operatorname{Id}\right) g(i_{1}i_{2}\cdots i_{n}y) \text.
	\end{equation*}
	
	Taking the limit \(n \to +\infty\), we see the action of the above operator becomes the action  of \(L_{\frac{1}{2N} J}\).
	Therefore, they determine the same eigenstate \(\eta\).
\end{example}

\phantom. \hrule \phantom.

William M. M. Braucks  (braucks.w@gmail.com)
\smallskip

Artur O. Lopes (arturoscar.lopes@gmail.com)
\smallskip

Inst. Mat. Est. - UFRGS - Porto Alegre, Brazil


\begin{thebibliography}{10}
	
	\bibitem{Arb}	 A. Arbieto, A. Junqueira and B. Santiago. On weakly hyperbolic iterated
	function systems. {\em Bull. Braz. Math. Soc. (N.S.)}, 48(1):111-140, oct 2016.
	
\bibitem{AvVi}	
A. Avila and M. Viana, Extremal Lyapunov exponents: an invariance principle and applications,
Inventiones mathematicae 181 (1), 115-178, 2010
	
\bibitem{En1}	H. Araki, Relative Hamiltonian for faithful normal states of a von Neumann algebra. Publ. Res. Inst. Math. Sci. 9, 165-209,  1973/74

\bibitem{En4}
H. Araki, Gibbs states of a one dimensional quantum lattice, Comm.
Math. Phys, 14, pp 120-157, 1969
	
	\bibitem{arv-sub}
	W.~B. Arveson.
	\newblock Subalgebras of {C*}-algebras.
	\newblock 1969.
	
	\bibitem{arv-sub2}
	W.~B. Arveson.
	\newblock Subalgebras of {C*}-algebras {II}.
	\newblock 1972.
	
	\bibitem{back1}
	L. Backes, A. Brown  and C. Butler, Continuity of Lyapunov exponents for cocycles with invariant holonomies, Journal of Modern Dynamics, Volume 12, pp 223-260, 2018
	
	
\bibitem{back2}	L. Backes, M. Poletti, P. Varandas and Y. Lima, Simplicity of Lyapunov spectrum for linear cocycles over non-uniformly hyperbolic systems, Erg. Theo. and Dyn. Syst, Volume 40, Issue 11, 2947-2969, 2020.

\bibitem{bara}	
A. Baraviera, C. F. Lardizabal, A. O. Lopes and M. Terra Cunha,
A Thermodynamic Formalism for density matrices in Quantum Information,
Applied Mathematics Research Express, Vol. 2010, No. 1, pp. 63-118 (2010) 
	
\bibitem{Benoi}	T. Benoist, M. Fraas, Y. Pautrat and C. Pellegrini. Invariant measure
 for quantum trajectories. Prob. Theory and Related Fields, Vol. 174,
 Ed. 1-2, 307-334 (2019)
	
	\bibitem{Bochi} J. Bochi,  	A. Katok and F. R. Hertz, Flexibility of Lyapunov exponents,  Erg. Theo. and Dyn. Syst., 42 (2022), pp. 554-591.
	
	\bibitem{brknlo-epgcgp}
	J.~E. Brasil, J.~Knorst, and A.~O. Lopes.
	\newblock Thermodynamic formalism for quantum channels: entropy, pressure, {Gibbs} channels and generic properties.
	\newblock {\em Communications in Contemporary Mathematics}, 25(04):2150090, 2023.
	
	\bibitem{brknlo-epgcgp1}
	J.~E. Brasil, J.~Knorst, and A.~O. Lopes,
	Lyapunov exponents for Quantum Channels: an entropy formula and generic properties,
  Journal of Dyn. Syst. and Geom. Theories, Vol 19(2) 155-187
(2021) 

	\bibitem{brro}
	O.~Bratteli and D.~W. Robinson.
	\newblock {\em Operator algebras and quantum statistical mechanics: Volume 1: {C*}-and {W*}-Algebras. Symmetry Groups. Decomposition of States}.
	\newblock Springer Science \& Business Media, 2012.
	
	
	
	\bibitem{bp-qsm}
	J.~B. Bru and W.~de~Siqueira~Pedra.
	\newblock {\em {C*}-algebras and Mathematical Foundations of Quantum Statistical Mechanics}.
	\newblock Springer, 2023.
	
	\bibitem{ca-la}
	J.~Campbell and Y.~Latushkin.
	\newblock Sharp estimates in Ruelle theorems for matrix transfer operators.
	\newblock {\em Communications in mathematical physics}, 185(2):379--396, 1997.
	
\bibitem{En2}	A. Connes, H. Narnhofer, and W. Thirring, Dynamical entropy of C*-
algebras and von Neumann algebras, Comm. Math. Phys. 112,
691-719 1987
	
	\bibitem{ch-nonlinear}
	K.~Chang.
	\newblock A nonlinear {Krein-Rutman} theorem.
	\newblock {\em Journal of Systems Science and Complexity}, 22:542--554, 2009.
	
	\bibitem{choi-schw}
	M.-D. Choi.
	\newblock A {Schwarz} inequality for positive linear maps on {C*}-algebras.
	\newblock {\em Illinois Journal of Mathematics}, 18(4):565--574, 1974.
	
	\bibitem{choi-comp}
	M.-D. Choi.
	\newblock Completely positive linear maps on complex matrices.
	\newblock {\em Linear algebra and its applications}, 10(3):285--290, 1975.
	
	\bibitem{craizer-m}
	M.~Craizer.
	\newblock Teoria erg{\'o}dica das transforma{\c{c}}oes expansoras. Master Dissertation, IMPA, 1985
	
	\bibitem{da-example}
	K.~Davidson.
	\newblock {C*}-algebras by example.
	\newblock {\em American Mathematical Society}, 1996.
	
	\bibitem{patger-irr}
	B.~de~Pagter.
	\newblock Irreducible compact operators.
	\newblock {\em Mathematische Zeitschrift}, 192(1):149--153, 1986.
	
	\bibitem{evkr}
	D.~E. Evans and R.~Hoegh-Krohn.
	\newblock Spectral properties of positive maps on {C*}-algebras.
	\newblock {\em Journal of the London Mathematical Society}, 2(2):345--355, 1978.
	
	\bibitem{Feng}
	D. J. Feng and A.  Kaenmaki,  Equilibrium states of the pressure function for products of
 matrices, Discrete Contin. Dyn. Syst. 30, 3 (2011), 699-708
 
	\bibitem{faren-irr}
	D.~Farenick.
	\newblock Irreducible positive linear maps on operator algebras.
	\newblock {\em Proceedings of the American Mathematical Society}, 124(11):3381--3390, 1996.
	
\bibitem{En5}	D. Kerr and C. Pinzari, Noncommutative Pressure and the Variational
Principle in Cuntz–Krieger-type C*-algebras, Journal of Functional
Analysis, Volume 188, Issue 1, 10 January 2002, Pages 156-215, 2002
	
	\bibitem{KLS} B. Kloeckner, A. O. Lopes and M. Stadlbauer, Contraction in the Wasserstein 
	metric  for some  Markov chains, and applications to the dynamics of expanding maps, \newblock {\em
		Nonlinearity}, 28, Number 11, 4117–4137 (2015)
	
	
	\bibitem{glwe}
	J.~Gl{\"u}ck and M.~R. Weber.
	\newblock Almost interior points in ordered Banach spaces and the long--term behaviour of strongly positive operator semigroups.
	\newblock {\em arXiv preprint arXiv:1901.03306}, 2019.
	
	\bibitem{kreinrutman}
	M.~G. Krein and M.~A. Rutman.
	\newblock Linear operators leaving invariant a cone in a {Banach} space.
	\newblock {\em Uspekhi Matematicheskikh Nauk}, 3(1):3--95, 1948.
	
	\bibitem{Lar} C. F. Lardizabal,
	On Ruelle transfer operators and completely positive maps, arXiv (2012)
	
	\bibitem{lopes-immq}
	A.~O. Lopes.
	\newblock Introdução à matemática da mecânica quântica.
	
	http://mat.ufrgs.br/$\sim$alopes/pub3/livroquantum.pdf
	
	\bibitem{lopes-nft}
	A.~O. Lopes.
	\newblock Thermodynamic formalism, maximizing probabilities and large deviations.
	
	http://mat.ufrgs.br/$\sim$alopes/pub3/notesformteherm.pdf
	
	\bibitem{mane-ntf}
	R.~Ma\~n{\'e}.
	\newblock Notes on thermodynamic formalism.
	
	
	http://mat.ufrgs.br/$\sim$alopes/pub3/Thermodynamic-Mane-2023.pdf
	
	
	\bibitem{Mayer}  D. H.  Mayer,
	The Ruelle-Araki transfer operator in classical statistical mechanics, Lecture Notes in Math, Springer Verlag (1980)
	
	
	\bibitem{nc-rto}
	T.~Matsui.
	\newblock On non-commutative {Ruelle} transfer operator.
	\newblock {\em Reviews in Mathematical Physics}, 13(10), 2001.
	
	
	\bibitem{Miha} E. Mihailescu, Thermodynamic formalism for invariant measures in it
	erated function systems with overlaps, \newblock {\em  Communications in Contemporary
		Mathematics} 24 (06), 215004
		
	\bibitem{Morris1} I. D. Morris, Ergodic properties of matrix equilibrium states,  Erg. Theo. and Dyn. Syst. Vol 38, N 6,  2295-2320, 2018
	
	\bibitem{nich}
	M.~A. Nielsen and I.~L. Chuang.
	\newblock {\em Quantum computation and quantum information}.
	\newblock Cambridge university press, 2010.
	
	
	\bibitem{nuss-kr}
	R.~D. Nussbaum.
	\newblock Eigenvectors of nonlinear positive operators and the linear {Krein-Rutman} theorem.
	\newblock In {\em Fixed Point Theory: Proceedings of a Conference Held at Sherbrooke, Qu{\'e}bec, Canada, June 2--21, 1980}, pages 309--330. Springer, 2006.
	
	\bibitem{papo}
	W.~Parry and M.~Pollicott.
	\newblock Zeta functions and the periodic orbit structure of hyperbolic dynamics.
	\newblock {\em Ast{\'e}risque}, 187(188):1--268, 1990.
	
	\bibitem{paul}
	V.~Paulsen.
	\newblock {\em Completely bounded maps and operator algebras}, volume~78.
	\newblock Cambridge University Press, 2002.
	
	\bibitem{En3} C. Pinzari, Y. Watatani and K. Yonetani, KMS states, entropy and the
variational principle in full C*-dynamical systems, Communications in
Mathematical Physics 213 (2), 331-379, 2000
	
	\bibitem{rolala}
	M.~Rordam, F.~Larsen, and N.~J. Laustsen.
	\newblock An introduction to {K}-theory for {C*}-algebras.
	\newblock {\em London Mathematical Society Student Texts/Cambridge University Press}, 2000.
	
	\bibitem{ruelle-rig}
	D.~Ruelle.
	\newblock {\em Statistical Mechanics: Rigorous Results}.
	\newblock World Scientific, 1969.
	
	\bibitem{st-pfun}
	W.~F. Stinespring.
	\newblock Positive functions on {C*}-algebras.
	\newblock {\em Proceedings of the American Mathematical Society}, 6(2):211--216, 1955.
	
	\bibitem{Vwe}
	B. Schoenmakers, J. Tjoelker, P. Tuyls and E. Verbitskiy,
	Smooth Renyi entropy of ergodic quantum information sources,
Conference IEEE International Symposium on Information Theory  256-260 (2007)
	
	
	\bibitem{st-pos}
	E.~Stormer.
	\newblock Positive linear maps of operator algebras.
	\newblock 1963.
	
	\bibitem{ta-theory-i}
	M.~Takesaki.
	\newblock {\em Theory of operator algebras I}.
	\newblock Springer, 1979.
	
	\bibitem{ts-core}
	B.~S. Tam and H.~Schneider.
	\newblock On the core of a cone-preserving map.
	\newblock {\em Transactions of the American Mathematical Society}, 343(2):479--524, 1994.
	
	\bibitem{finertherm}
	M.~Urba{\'n}ski, M.~Roy, and S.~Munday.
	\newblock {\em Non-invertible Dynamical Systems, Volume 2: Finer Thermodynamic Formalism--Distance Expanding Maps and Countable State Subshifts of Finite Type, Conformal GDMSs, Lasota-Yorke Maps and Fractal Geometry}.
	\newblock Walter de Gruyter GmbH \& Co KG, 2022.
	
	\bibitem{Varao} 	R. Var\~ao,
	Lyapunov exponents and regularity of
	invariant foliations for partially hyperbolic
	diffeomophisms on $\mathbb{T}^3$,  Dynamical Systems,
	Vol. 30, Issue 2, 189-199 (2015)

	
	\bibitem{olivi-fte} M.~Viana and K.~Oliveira,
	\newblock {\em Foundations of Ergodic Theory}.
	\newblock {\em Cambridge Press}, 2016.
	
	
	\bibitem{walters}
	P.~Walters.
	\newblock {\em An introduction to ergodic theory}, volume~79.
	\newblock Springer Science \& Business Media, 2000.
	
	\bibitem{ye}
	Y.-L. Ye.
	\newblock Vector-valued ruelle operators.
	\newblock {\em Journal of mathematical analysis and applications}, 299(2):341--356, 2004.
	
	\bibitem{zhang-kr}
	L.~Zhang.
	\newblock A generalized {Krein-Rutman} theorem.
	\newblock {\em arXiv preprint arXiv:1606.04377}, 2016.
	
\end{thebibliography}
\end{document}